\newtheorem{Def}{Definition}
\newtheorem{Assumption}{Assumption}
\newtheorem{Thm}{Theorem}
\newtheorem{Lem}{Lemma}
\newtheorem{Cor}{Corollary}
\newtheorem{Rem}{Remark}
\DeclareMathOperator*{\argmin}{argmin}
\title{A Primal-Dual Type Algorithm with the $O(1/t)$ Convergence Rate  for Large Scale Constrained Convex Programs}
\author{Hao Yu and Michael J. Neely \\Department of Electrical Engineering \\
University of Southern California}
\begin{document}
\maketitle

\begin{abstract}
This paper considers large scale constrained convex programs, which are usually not solvable by interior point methods or other Newton-type methods due to the prohibitive computation and storage complexity for Hessians and matrix inversions.  Instead, large scale constrained convex programs are often solved by gradient based methods or decomposition based methods.  The conventional primal-dual subgradient method, aka, Arrow-Hurwicz-Uzawa subgradient method, is a low complexity algorithm with the $O(1/\sqrt{t})$ convergence rate, where $t$ is the number of iterations. If the objective and constraint functions are separable, the Lagrangian dual type method can decompose a large scale convex program into multiple parallel small scale convex programs. The classical dual gradient algorithm is an example of Lagrangian dual type methods and has convergence rate $O(1/\sqrt{t})$. Recently, a new Lagrangian dual type algorithm with faster $O(1/t)$ convergence is proposed in \cite{YuNeely15ArXivGeneralConvex}.  However, if the objective or constraint functions are not separable, each iteration of the Lagrangian dual type method in \cite{YuNeely15ArXivGeneralConvex} requires to solve a large scale unconstrained convex program, which can have huge complexity. This paper proposes a new primal-dual type algorithm, which only involves simple gradient updates at each iteration and has the $O(1/t)$ convergence rate.   

\end{abstract}


\section{Introduction}\label{sec:intro}
Fix positive integers $n$ and $m$, which are typically large. 
Consider the general constrained convex program:
\begin{align}
\text{minimize:} \quad &f(\mathbf{x}) \label{eq:program-objective}\\
\text{such that:} \quad  &  g_k(\mathbf{x}) \leq  0, \forall k\in\{1,2,\ldots,m\} \label{eq:program-inequality-constraint}\\
			 &  \mathbf{x}\in \mathcal{X} \label{eq:program-set-constraint}
\end{align}
where set $\mathcal{X}\subseteq \mathbb{R}^{n}$ is a compact convex set;  function $f(\mathbf{x})$ is convex and smooth on $\mathcal{X}$; and functions $g_k(\mathbf{x}),\forall k \in\{1,2,\ldots,m\}$ are convex, smooth and Lipschitz continuous on $\mathcal{X}$. Denote the stacked vector of multiple functions $g_1(\mathbf{x}), g_2(\mathbf{x}), \ldots, g_m(\mathbf{x})$ as $\mathbf{g}(\mathbf{x}) = \big[g_1(\mathbf{x}), g_2(\mathbf{x}), \ldots, g_m(\mathbf{x})\big]^T$.  The Lipschitz continuity of each $g_{k}(\mathbf{x})$ implies that $\mathbf{g}(\mathbf{x})$ is Lipschitz continuous on $\mathcal{X}$. Throughout this paper, we have the following assumptions on convex program  \eqref{eq:program-objective}-\eqref{eq:program-set-constraint}:

\begin{Assumption}[Basic Assumptions] \label{as:basic}~
\begin{itemize}
\item  There exists a (possibly non-unique) optimal solution $\mathbf{x}^\ast\in \mathcal{X}$ that solves convex program \eqref{eq:program-objective}-\eqref{eq:program-set-constraint}. 
\item  There exists $L_f\geq0$ such that $\Vert \nabla f(\mathbf{x}) - \nabla f(\mathbf{y})\Vert \leq L_{f}\Vert \mathbf{x} - \mathbf{y}\Vert$ for all $\mathbf{x}, \mathbf{y}\in \mathcal{X}$, i.e., $f(\mathbf{x})$ is smooth with modulus $L_f$. For each $k\in\{1,2,\ldots,m\}$, there exists $L_{g_k}\geq0$ such that $\Vert \nabla g_{k}(\mathbf{x}) - \nabla g_{k}(\mathbf{y})\Vert \leq  L_{g_{k}}\Vert \mathbf{x} - \mathbf{y}\Vert$ for all $\mathbf{x}, \mathbf{y}\in \mathcal{X}$, i.e., $g_{k}(\mathbf{x})$ is smooth with modulus $L_{g_{k}}$. Denote $\mathbf{L}_{\mathbf{g}} = [L_{g_{1}}, \ldots, L_{g_{m}}]^{T}$. 
\item There exists a constant $\beta$ such that $\Vert \mathbf{g}(\mathbf{x}) - \mathbf{g}(\mathbf{y})\Vert \leq \beta \Vert \mathbf{x} - \mathbf{y}\Vert$ for all $\mathbf{x}, \mathbf{y} \in \mathcal{X}$, i.e., $\mathbf{g}(\mathbf{x})$ is Lipschitz continuous with modulus $\beta$. 
\item There exists a constant $C$ such that $\Vert \mathbf{g}(\mathbf{x})\Vert \leq C$ for all $\mathbf{x}\in \mathcal{X}$.  
\item There exists a constant $R$ such that $\Vert \mathbf{x} - \mathbf{y}\Vert\leq R$ for all $\mathbf{x}, \mathbf{y}\in \mathcal{X}$.
\end{itemize}
\end{Assumption}
Note that the existence of $C$ follows directly from the continuity of $\mathbf{g}(\mathbf{x})$ and the compactness of set $\mathcal{X}$. The existence of $R$ follows directly from the compactness of set $\mathcal{X}$.

\begin{Assumption}[Existence of Lagrange multipliers] \label{as:strong-duality}
 There exists a Lagrange multiplier vector $\boldsymbol{\lambda}^\ast = [\lambda_1^\ast, \lambda_2^\ast, \ldots, \lambda_m^\ast]\geq \mathbf{0}$ attaining the strong duality for problem \eqref{eq:program-objective}-\eqref{eq:program-set-constraint}, i.e., 
\begin{align*}
q(\boldsymbol{\lambda}^\ast) = \min\limits_{\mathbf{x}\in \mathcal{X}}\left\{f(\mathbf{x}) : g_k(\mathbf{x})\leq 0, \forall k\in\{1,2,\ldots,m\}\right\}, 
\end{align*}
where $q(\boldsymbol{\lambda}) = \min\limits_{\mathbf{x}\in \mathcal{X}}\{f(\mathbf{x})+ \sum_{k=1}^m \lambda_k g_k(\mathbf{x})\}
$ is the {\it Lagrangian dual function} of problem \eqref{eq:program-objective}-\eqref{eq:program-set-constraint}.\end{Assumption}

The existence of Lagrange multipliers attaining strong duality is a mild assumption. 
For convex programs, it is implied by the existence of a vector $\mathbf{s} \in \mathcal{X}$ such that
$g_k(\mathbf{s}) < 0$ for all $k \in \{1, \ldots, m\}$, called the \emph{Slater  condition}  \cite{book_NonlinearProgrammingTA,book_ConvexOptimization}.  

\subsection{Large Scale Convex Programs}
In general, convex program \eqref{eq:program-objective}-\eqref{eq:program-set-constraint} can be solved via interior point methods  (or other Newton type methods) which involve the computation of Hessians and matrix inversions at each iteration.  The associated computation complexity and memory space complexity at each iteration is between $O(n^{2})$ and $O(n^{3}$), which is prohibitive when $n$ is extremely large. For example, if $n=10^{5}$ and each float point number uses $4$ bytes, then $40$ Gbytes of memory space is required even to save the Hessian at each iteration.  Thus, large scale convex programs are usually solved by gradient based methods or decomposition based methods.

\subsection{The Primal-Dual Subgradient Method, aka, Arrow-Hurwicz-Uzawa Subgradient Method}
The primal-dual subgradient method applied to convex program \eqref{eq:program-objective}-\eqref{eq:program-set-constraint} is described in Algorithm \ref{alg:primal-dual-subgradient}. The updates of $\mathbf{x}(t)$ and $\boldsymbol{\lambda}(t)$ only involve the computation of gradient and simple projection operations, which are much simpler than the computation of Hessians and matrix inversions for extremely large $n$.  Thus, compared with the interior point methods, the primal-dual subgradient algorithm has lower complexity computations at each iteration and hence is more suitable to large scale convex programs.  However, the convergence rate of Algorithm \ref{alg:primal-dual-subgradient} is only $O(1/\sqrt{t})$, where $t$ is the number of iterations \cite{Nedic09_PrimalDualSubgradient}. 

\begin{algorithm} 
\caption{The Primal-Dual Subgradient Algorithm}
\label{alg:primal-dual-subgradient}
Let $c>0$ be a constant step size. Choose any $\mathbf{x}(0) \in \mathcal{X}$. Initialize Lagrangian multipliers $\lambda_{k}(0) = 0, \forall k\in\{1,2,\ldots, m\}$.  At each iteration $t\in\{1,2,\ldots\}$, observe $\mathbf{x}(t-1)$ and $\boldsymbol{\lambda}(t-1)$ and do the following:
\begin{itemize}
\item  Choose $\mathbf{x}(t)$ via 
\begin{align*}
\mathbf{x}(t)  =\mathcal{P}_{\mathcal{X}} \left[ \mathbf{x}(t-1) - c \sum_{k=1}^{m} \lambda_{k}(t-1) \nabla g_{k}(\mathbf{x}(t-1))\right] ,
\end{align*}
where $\mathcal{P}_{\mathcal{X}}[\cdot]$ is the projection onto convex set $\mathcal{X}$.
\item Update Lagrangian multipliers $\boldsymbol{\lambda}(t)$ via 
\begin{align*}
\lambda_{k}(t) = \left[ \lambda_{k}(t-1) + c g_{k}(\mathbf{x}(t-1))\right]_{0}^{\lambda_{k}^{\max}}, \forall k\in\{1,2,\ldots, m\},
\end{align*}
where $\lambda_{k}^{\max} > \lambda_{k}^{\ast}$ and $[\cdot ]_{0}^{\max}$ is the projection onto interval $[0,\lambda_{k}^{\max}]$.
\item Update the running averages $\overline{\mathbf{x}}(t)$ via
\begin{align*}
\overline{\mathbf{x}}(t+1) = \frac{1}{t} \sum_{\tau=0}^{t} \mathbf{x}(\tau) = \overline{\mathbf{x}}(t) \frac{t}{t+1} + \mathbf{x}(t) \frac{1}{t+1}
\end{align*}
\end{itemize}
\end{algorithm}

\subsection{The Lagrangian Dual Type Method}
The classical dual subgradient algorithm is a Lagrangian dual type iterative method that approaches optimality for strictly convex programs \cite{book_NonlinearProgramming_Bertsekas}. A modification of the classical dual subgradient algorithm that averages the resulting sequence of primal estimates can solve general convex programs and has the $O(1/\sqrt{t})$ convergence rate \cite{Neely05DCDIS,Nedic09,Neely14Arxiv_ConvergenceTime}. The dual subgradient algorithm with averaged primals is suitable to large scale convex programs because the updates of each component $x_{i}(t)$ are independent and parallel if functions $f(\mathbf{x})$ and $g_{k}(\mathbf{x})$ in convex program \eqref{eq:program-objective}-\eqref{eq:program-set-constraint} are separable with respect to each component (or block) of $\mathbf{x}$, e.g., $f(\mathbf{x}) = \sum_{i=1}^{n} f_{i} (x_{i})$ and $g_{k}(\mathbf{x}) = \sum_{i=1}^{n} g_{k,i}(x_{i})$.

Recently, a new Lagrangian dual type algorithm with  convergence rate $O(1/t)$ for general convex programs is proposed in \cite{YuNeely15ArXivGeneralConvex}.  This algorithm can solve convex program \eqref{eq:program-objective}-\eqref{eq:program-set-constraint} following the steps described in Algorithm \ref{alg:general-alg}.

\begin{algorithm} 
\caption{Algorithm 1 in \cite{YuNeely15ArXivGeneralConvex}}
\label{alg:general-alg}
Let $\alpha>0$ be a constant parameter. Choose any $\mathbf{x}(-1) \in \mathcal{X}$. Initialize virtual queues $Q_{k}(0) = \max\{0, -g_{k}(\mathbf{x}(-1))\} , \forall k\in\{1,2,\ldots, m\}$. At each iteration $t\in\{0,1,2,\ldots\}$, observe $\mathbf{x}(t-1)$ and $\mathbf{Q}(t)$ and do the following:
\begin{itemize}
\item  Choose $\mathbf{x}(t)$ as 
\begin{align*}
\mathbf{x}(t)  =\argmin_{\mathbf{x}\in \mathcal{X}} \Big\{ f(\mathbf{x})  + [\mathbf{Q}(t) + \mathbf{g}(\mathbf{x}(t-1))]^T\mathbf{g}(\mathbf{x}) +  \alpha \Vert \mathbf{x} - \mathbf{x}(t-1)\Vert^{2}\Big\}.
\end{align*}

\item Update virtual queue vector $\mathbf{Q}(t)$ via  \[Q_{k}(t+1) = \max\{-g_{k}(\mathbf{x}(t)), Q_{k}(t) + g_{k}(\mathbf{x}(t))\}, \forall k\in\{1,2,\ldots, m\}.\]
\item Update the running averages $\overline{\mathbf{x}}(t)$ via \[\overline{\mathbf{x}}(t+1) = \frac{1}{t+1}\sum_{\tau=0}^{t} \mathbf{x}(\tau) = \overline{\mathbf{x}}(t) \frac{t}{t+1} + \mathbf{x}(t) \frac{1}{t+1}.\]
\end{itemize}
\end{algorithm}

Similar to the dual subgradient algorithm with averaged primals, Algorithm \ref{alg:general-alg} can decompose the updates of $\mathbf{x}(t)$ into smaller independent subproblems if functions $f(\mathbf{x})$ and $g_{k}(\mathbf{x})$ are separable. Moreover, Algorithm \ref{alg:general-alg} has convergence rate $O(1/t)$, which is faster than the primal-dual subgradient algorithm or the dual subgradient algorithm with averaged primals.

However, in the case $f(\mathbf{x})$ or $g_{k}(\mathbf{x})$ are not separable,  each update of $\mathbf{x}(t)$ requires to solve a set constrained convex program. If the dimension $n$ is large, such a set constrained convex program should be solved via a gradient based method instead of a Newton method. However, the gradient based method for set constrained convex programs is an iterative technique and involves at least one projection operation at each iteration. For instance, to obtain an $\epsilon$-approximate solution to the set constrained convex program, the projected gradient method requires $O(1/\epsilon)$ iterations and Nesterov's fast gradient method requires $O(1/\sqrt{\epsilon})$ iterations \cite{book_ConvexOpt_Nesterov}.

\subsection{New Algorithm}
Consider large scale convex programs with non-separable $f(\mathbf{x})$ or $g_{k}(\mathbf{x})$, e.g., $f(\mathbf{x}) = \Vert \mathbf{A} \mathbf{x} - \mathbf{b}\Vert^{2}$. In this case,  Algorithm \ref{alg:primal-dual-subgradient} has convergence rate $O(1/\sqrt{t})$ using low complexity iterations; while Algorithm \ref{alg:general-alg} has convergence rate $O(1/t)$  using high complexity iterations.  

This paper proposes a new algorithm described in Algorithm \ref{alg:new-alg} which combines the advantages of Algorithm \ref{alg:primal-dual-subgradient} and Algorithm \ref{alg:general-alg}.  The new algorithm modifies  Algorithm \ref{alg:general-alg} by changing the update of $\mathbf{x}(t)$ from a complicated minimization problem to a simple projection operation.  Meanwhile, the convergence rate $O(1/t)$ of Algorithm \ref{alg:general-alg} is preserved in the new algorithm.
     
\begin{algorithm} 
\caption{New Algorithm}
\label{alg:new-alg}
Let $\gamma >0$ be a constant step size. Choose any $\mathbf{x}(-1) \in \mathcal{X}$. Initialize virtual queues $Q_{k}(0) = \max\{0, -g_{k}(\mathbf{x}(-1))\} , \forall k\in\{1,2,\ldots, m\}$. At each iteration $t\in\{0,1,2,\ldots\}$, observe $\mathbf{x}(t-1)$ and $\mathbf{Q}(t)$ and do the following:
\begin{itemize}
\item  Define $\mathbf{d}(t) = \nabla f(\mathbf{x}(t-1)) + \sum_{k=1}^{m} [Q_{k}(t) + g_{k}(\mathbf{x}(t-1))] \nabla g_{k}(\mathbf{x}(t-1))$, which is the gradient of function $\phi(\mathbf{x}) =  f(\mathbf{x}) +  [\mathbf{Q}(t) + \mathbf{g}(\mathbf{x}(t-1))]^{T} \mathbf{g}(\mathbf{x})$ at point $\mathbf{x} = \mathbf{x}(t-1)$. Choose $\mathbf{x}(t)$ as 
\begin{align*}
\mathbf{x}(t)  =\mathcal{P}_{\mathcal{X}}\left[ \mathbf{x}(t-1) -\gamma \mathbf{d}(t) \right],
\end{align*}
where $\mathcal{P}_{\mathcal{X}}[\cdot]$ is the projection onto convex set $\mathcal{X}$.
\item Update virtual queue vector $\mathbf{Q}(t)$ via  \[Q_{k}(t+1) = \max\{-g_{k}(\mathbf{x}(t)), Q_{k}(t) + g_{k}(\mathbf{x}(t))\}, \forall k\in\{1,2,\ldots, m\}.\]
\item Update the running averages $\overline{\mathbf{x}}(t)$ via \[\overline{\mathbf{x}}(t+1) = \frac{1}{t+1}\sum_{\tau=0}^{t} \mathbf{x}(\tau) = \overline{\mathbf{x}}(t) \frac{t}{t+1} + \mathbf{x}(t) \frac{1}{t+1}.\]
\end{itemize}
\end{algorithm}

\section{Preliminaries and Basis Analysis}

This section presents useful preliminaries on convex analysis and important facts of Algorithm \ref{alg:new-alg}.

\subsection{Preliminaries}

\begin{Def}[Lipschitz Continuity] \label{def:Lipschitz-continuous}
Let $\mathcal{X} \subseteq \mathbb{R}^n$ be a convex set. Function $h: \mathcal{X}\rightarrow \mathbb{R}^m$ is said to be Lipschitz continuous  on $\mathcal{X}$ with modulus $L$ if there exists $L> 0$ such that $\Vert h(\mathbf{y}) - h(\mathbf{x}) \Vert \leq L \Vert\mathbf{y} - \mathbf{x}\Vert$  for all $ \mathbf{x}, \mathbf{y} \in \mathcal{X}$. 
\end{Def}

\begin{Def}[Smooth Functions]
Let $\mathcal{X} \subseteq \mathbb{R}^n$ and function $h(\mathbf{x})$ be continuously differentiable on $\mathcal{X}$. Function $h(\mathbf{x})$ is said to be smooth on $\mathcal{X}$ with modulus $L$ if $\nabla h(\mathbf{x})$ is Lipschitz continuous on $\mathcal{X}$ with modulus $L$.
\end{Def}

Note that linear function $h(\mathbf{x}) = \mathbf{a}^T \mathbf{x}$ is smooth with modulus $0$.  If a function $h(\mathbf{x})$ is smooth with modulus $L$, then $c h(\mathbf{x})$ is smooth with modulus $cL$ for any constant  $c>0$.

\begin{Lem}[Descent Lemma, Proposition A.24 in \cite{book_NonlinearProgramming_Bertsekas}] \label{lm:descent-lemma}
If $h$ is smooth on $\mathcal{X}$ with modulus $L$, then for any $\mathbf{x}, \mathbf{y} \in \mathcal{X}$
\begin{align*}
 \nabla h(\mathbf{x})^T (\mathbf{y} - \mathbf{x}) - \frac{L}{2} || \mathbf{y} - \mathbf{x}||^2
 \leq h(\mathbf{y}) - h(\mathbf{x}) \leq \nabla h(\mathbf{x})^T (\mathbf{y} - \mathbf{x}) + \frac{L}{2} || \mathbf{y} - \mathbf{x}||^2.
\end{align*}
\end{Lem}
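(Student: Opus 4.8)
The statement is the classical descent lemma, quoted here as Proposition A.24 in \cite{book_NonlinearProgramming_Bertsekas}; nonetheless I sketch a self-contained argument. The plan is to reduce the two-sided inequality to the single absolute-value bound
\begin{align*}
\big| h(\mathbf{y}) - h(\mathbf{x}) - \nabla h(\mathbf{x})^T(\mathbf{y}-\mathbf{x}) \big| \leq \frac{L}{2}\Vert \mathbf{y} - \mathbf{x}\Vert^2 ,
\end{align*}
since adding $+\frac{L}{2}\Vert\mathbf{y}-\mathbf{x}\Vert^2$ and $-\frac{L}{2}\Vert\mathbf{y}-\mathbf{x}\Vert^2$ to the term $\nabla h(\mathbf{x})^T(\mathbf{y}-\mathbf{x})$ in this bound recovers, respectively, the upper and lower inequalities in the statement.

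First I would invoke convexity of $\mathcal{X}$: for fixed $\mathbf{x},\mathbf{y}\in\mathcal{X}$ the entire segment $\mathbf{x}+\tau(\mathbf{y}-\mathbf{x})$, $\tau\in[0,1]$, lies in $\mathcal{X}$, so the scalar function $\varphi(\tau) = h(\mathbf{x}+\tau(\mathbf{y}-\mathbf{x}))$ is continuously differentiable on $[0,1]$ with $\varphi'(\tau) = \nabla h(\mathbf{x}+\tau(\mathbf{y}-\mathbf{x}))^T(\mathbf{y}-\mathbf{x})$. The fundamental theorem of calculus then gives
\begin{align*}
h(\mathbf{y}) - h(\mathbf{x}) = \int_0^1 \nabla h\big(\mathbf{x}+\tau(\mathbf{y}-\mathbf{x})\big)^T(\mathbf{y}-\mathbf{x})\, d\tau .
\end{align*}

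Next I would subtract $\nabla h(\mathbf{x})^T(\mathbf{y}-\mathbf{x}) = \int_0^1 \nabla h(\mathbf{x})^T(\mathbf{y}-\mathbf{x})\, d\tau$ from both sides, and bound the resulting integrand using Cauchy--Schwarz and then the smoothness hypothesis $\Vert\nabla h(\mathbf{x}+\tau(\mathbf{y}-\mathbf{x})) - \nabla h(\mathbf{x})\Vert \leq L\,\Vert \tau(\mathbf{y}-\mathbf{x})\Vert = L\tau\Vert\mathbf{y}-\mathbf{x}\Vert$, to obtain
\begin{align*}
\Big| \int_0^1 \big[\nabla h(\mathbf{x}+\tau(\mathbf{y}-\mathbf{x})) - \nabla h(\mathbf{x})\big]^T(\mathbf{y}-\mathbf{x})\, d\tau \Big| \leq \int_0^1 L\tau\Vert\mathbf{y}-\mathbf{x}\Vert^2\, d\tau = \frac{L}{2}\Vert\mathbf{y}-\mathbf{x}\Vert^2 .
\end{align*}
Combining the previous two displays yields the absolute-value bound above, and hence the lemma.

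There is no genuine obstacle in this argument; the only points requiring care are the justification that the line segment stays inside $\mathcal{X}$ so that $\nabla h$ is defined and Lipschitz along it (this is precisely where convexity of $\mathcal{X}$ is used), and the elementary evaluation $\int_0^1 \tau\, d\tau = 1/2$, which is what produces the sharp constant $L/2$ rather than $L$.
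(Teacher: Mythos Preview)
Your argument is correct and is the standard proof of the descent lemma. The paper itself does not prove this statement; it simply cites it as Proposition~A.24 in \cite{book_NonlinearProgramming_Bertsekas}, so there is no ``paper's own proof'' to compare against. Your integral-remainder derivation is exactly the argument one finds in that reference.
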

\begin{Def}[Strongly Convex Functions]
 Let $\mathcal{X} \subseteq \mathbb{R}^n$ be a convex set. Function $h$ is said to be strongly convex on $\mathcal{X}$ with modulus $\alpha$ if there exists a constant $\alpha>0$ such that $h(\mathbf{x}) - \frac{1}{2} \alpha \Vert \mathbf{x} \Vert^2$ is convex on $\mathcal{X}$.
\end{Def}

By the definition of strongly convex functions, it is easy to show that if $h(\mathbf{x})$ is convex and $\alpha>0$, then $h(\mathbf{x}) + \alpha \Vert \mathbf{x} - \mathbf{x}_0\Vert^2$ is strongly convex with modulus $2\alpha$ for any constant $\mathbf{x}_0$.

\begin{Lem}[Theorem 6.1.2 in \cite{book_FundamentalConvexAnalysis}] \label{lm:strong-convex}
Let $h(\mathbf{x})$ be strongly convex on $\mathcal{X}$ with modulus $\alpha$. Let $\partial h(\mathbf{x})$ be the set of all subgradients of $h$ at point $\mathbf{x}$. Then $h(\mathbf{y}) \geq h(\mathbf{x}) + \mathbf{d}^T (\mathbf{y} - \mathbf{x}) + \frac{\alpha}{2}\Vert \mathbf{y} - \mathbf{x} \Vert^2$ for all $\mathbf{x}, \mathbf{y}\in \mathcal{X}$ and all $\mathbf{d}\in \partial f(\mathbf{x})$.
\end{Lem}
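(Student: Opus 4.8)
The plan is to reduce the claim to the ordinary subgradient inequality for an auxiliary convex function. Set $\widetilde h(\mathbf{x}) := h(\mathbf{x}) - \frac{\alpha}{2}\Vert \mathbf{x}\Vert^2$, which is convex on $\mathcal{X}$ by the definition of strong convexity with modulus $\alpha$. The first step is to relate $\partial h$ to $\partial \widetilde h$: since $q(\mathbf{x}) := \frac{\alpha}{2}\Vert \mathbf{x}\Vert^2$ is differentiable with $\nabla q(\mathbf{x}) = \alpha \mathbf{x}$, one checks directly from the definition of a subgradient (relative to the domain $\mathcal{X}$) that $\mathbf{d}\in \partial h(\mathbf{x})$ if and only if $\mathbf{d} - \alpha\mathbf{x}\in \partial \widetilde h(\mathbf{x})$. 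The second step applies convexity of $\widetilde h$, i.e. the plain subgradient inequality $\widetilde h(\mathbf{y}) \geq \widetilde h(\mathbf{x}) + (\mathbf{d} - \alpha\mathbf{x})^T(\mathbf{y}-\mathbf{x})$. The third step substitutes $\widetilde h = h - q$ and collects the quadratic terms using the algebraic identity $\frac{\alpha}{2}\Vert\mathbf{y}\Vert^2 - \frac{\alpha}{2}\Vert\mathbf{x}\Vert^2 - \alpha\mathbf{x}^T(\mathbf{y}-\mathbf{x}) = \frac{\alpha}{2}\Vert\mathbf{y}-\mathbf{x}\Vert^2$, which yields exactly $h(\mathbf{y}) \geq h(\mathbf{x}) + \mathbf{d}^T(\mathbf{y}-\mathbf{x}) + \frac{\alpha}{2}\Vert\mathbf{y}-\mathbf{x}\Vert^2$.

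Alternatively, I would give a fully self-contained argument that does not invoke any subdifferential calculus. Fix $\mathbf{x},\mathbf{y}\in\mathcal{X}$, $\mathbf{d}\in\partial h(\mathbf{x})$, and for $\theta\in(0,1)$ let $\mathbf{z}_\theta := (1-\theta)\mathbf{x} + \theta\mathbf{y}\in\mathcal{X}$ by convexity of $\mathcal{X}$. On one hand, the defining subgradient inequality at $\mathbf{x}$ gives $h(\mathbf{z}_\theta) \geq h(\mathbf{x}) + \theta\,\mathbf{d}^T(\mathbf{y}-\mathbf{x})$. On the other hand, convexity of $\widetilde h$ together with the identity $(1-\theta)\Vert\mathbf{x}\Vert^2 + \theta\Vert\mathbf{y}\Vert^2 - \Vert\mathbf{z}_\theta\Vert^2 = \theta(1-\theta)\Vert\mathbf{x}-\mathbf{y}\Vert^2$ gives the strong-convexity interpolation bound $h(\mathbf{z}_\theta) \leq (1-\theta)h(\mathbf{x}) + \theta h(\mathbf{y}) - \frac{\alpha}{2}\theta(1-\theta)\Vert\mathbf{x}-\mathbf{y}\Vert^2$. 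Chaining these two bounds, cancelling $h(\mathbf{x})$, dividing by $\theta>0$, and letting $\theta\downarrow 0$ yields the claim.

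I do not expect a genuine obstacle here; the statement is a standard characterization. The only point requiring care is that $\partial h(\mathbf{x})$ denotes subgradients with respect to the (possibly constrained) domain $\mathcal{X}$, so the passage between $\partial h$ and $\partial\widetilde h$ in the first approach should be verified straight from the definition rather than by quoting an unconstrained sum rule; the second, interpolation-based approach sidesteps this entirely and is the one I would ultimately write down.
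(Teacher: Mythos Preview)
Your proposal is correct; both routes you outline are standard and sound. Note, however, that the paper does not actually supply its own proof of this lemma: it is stated as a citation to Theorem~6.1.2 of \cite{book_FundamentalConvexAnalysis} and used as a black box, so there is no in-paper argument to compare against. Either of your approaches would serve as a valid self-contained proof, and your second (interpolation-based) argument is indeed the cleaner choice given the constrained-domain subgradient caveat you already flagged.
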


\begin{Lem}[Proposition B.24 (f) in \cite{book_NonlinearProgramming_Bertsekas}] \label{lm:first-order-optimality}
Let $\mathcal{X}\subseteq \mathbb{R}^n$ be a convex set. Let function $h$ be convex on $\mathcal{X}$ and $\mathbf{x}^{opt}$ be the global minimum of $h$ on $\mathcal{X}$.  Let $\partial h(\mathbf{x})$ be the set of all subgradients of $h$ at point $\mathbf{x}$. Then, there exists $\mathbf{d} \in \partial h(\mathbf{x}^{opt})$ such that $\mathbf{d}^T(\mathbf{x} - \mathbf{x}^{opt}) \geq 0$ for all $\mathbf{x}\in \mathcal{X}$. 
\end{Lem}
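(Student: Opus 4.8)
The plan is to read this as the first-order necessary optimality condition for constrained convex minimization and to reduce it to the elementary unconstrained zero-subgradient condition via subdifferential calculus. First I would introduce the indicator function $\delta_{\mathcal{X}}$ of the convex set $\mathcal{X}$ (equal to $0$ on $\mathcal{X}$ and $+\infty$ elsewhere) and note that $\mathbf{x}^{opt}$ minimizes $h$ over $\mathcal{X}$ if and only if $\mathbf{x}^{opt}$ is an unconstrained global minimizer of the convex function $h+\delta_{\mathcal{X}}$ on $\mathbb{R}^n$. Since a convex function attains its global minimum exactly at the points where $\mathbf{0}$ is a subgradient, this is equivalent to $\mathbf{0}\in\partial\big(h+\delta_{\mathcal{X}}\big)(\mathbf{x}^{opt})$.

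Next I would apply the Moreau--Rockafellar sum rule to obtain $\partial\big(h+\delta_{\mathcal{X}}\big)(\mathbf{x}^{opt}) = \partial h(\mathbf{x}^{opt}) + \partial\delta_{\mathcal{X}}(\mathbf{x}^{opt})$, together with the standard identification of $\partial\delta_{\mathcal{X}}(\mathbf{x}^{opt})$ with the normal cone $N_{\mathcal{X}}(\mathbf{x}^{opt}) = \{\mathbf{v}\in\mathbb{R}^n : \mathbf{v}^T(\mathbf{x}-\mathbf{x}^{opt})\le 0 \text{ for all } \mathbf{x}\in\mathcal{X}\}$. Consequently there exist $\mathbf{d}\in\partial h(\mathbf{x}^{opt})$ and $\mathbf{v}\in N_{\mathcal{X}}(\mathbf{x}^{opt})$ with $\mathbf{d}+\mathbf{v}=\mathbf{0}$, so $-\mathbf{d}\in N_{\mathcal{X}}(\mathbf{x}^{opt})$; unwinding the definition of the normal cone yields precisely $\mathbf{d}^T(\mathbf{x}-\mathbf{x}^{opt})\ge 0$ for all $\mathbf{x}\in\mathcal{X}$, which is the claim.

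The main obstacle is the constraint qualification needed to split the subdifferential of the sum; here it holds because $h$ is finite-valued convex on $\mathcal{X}$ — indeed, in every use of this lemma in the paper $h$ is smooth and finite on all of $\mathbb{R}^n$ — so the relative interiors of $\mathrm{dom}\,h$ and $\mathcal{X}$ intersect and the Moreau--Rockafellar hypotheses are met. If one prefers a self-contained argument that bypasses the sum rule, the difficulty becomes exhibiting a \emph{single} $\mathbf{d}$ that works simultaneously for all $\mathbf{x}\in\mathcal{X}$; this can be handled by a minimax argument: for feasible directions $\mathbf{v}=\mathbf{x}-\mathbf{x}^{opt}$ the one-sided directional derivative satisfies $h'(\mathbf{x}^{opt};\mathbf{v}) = \sup_{\mathbf{d}\in\partial h(\mathbf{x}^{opt})}\mathbf{d}^T\mathbf{v}\ge 0$ (since $h$ does not decrease along the segment from $\mathbf{x}^{opt}$ to $\mathbf{x}$, by convexity of $\mathcal{X}$ and optimality of $\mathbf{x}^{opt}$), and since $\partial h(\mathbf{x}^{opt})$ is nonempty, convex and compact while $\mathcal{X}$ is convex, Sion's minimax theorem permits interchanging $\inf_{\mathbf{x}\in\mathcal{X}}$ and $\sup_{\mathbf{d}\in\partial h(\mathbf{x}^{opt})}$ in $\inf_{\mathbf{x}}\sup_{\mathbf{d}}\mathbf{d}^T(\mathbf{x}-\mathbf{x}^{opt})$ to produce the required $\mathbf{d}$. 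In the smooth case actually needed later, $\partial h(\mathbf{x}^{opt})=\{\nabla h(\mathbf{x}^{opt})\}$ and the statement reduces to $\nabla h(\mathbf{x}^{opt})^T(\mathbf{x}-\mathbf{x}^{opt})\ge 0$, which follows immediately by contradiction from the Descent Lemma (Lemma~\ref{lm:descent-lemma}): a strictly negative inner product would make $h$ strictly smaller than $h(\mathbf{x}^{opt})$ at the feasible points $\mathbf{x}^{opt}+t(\mathbf{x}-\mathbf{x}^{opt})$ for small $t>0$.
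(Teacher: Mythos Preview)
The paper does not actually prove this lemma: it is quoted verbatim as Proposition~B.24(f) from Bertsekas and stated without argument, so there is no ``paper's own proof'' to compare against. Your indicator-function/normal-cone derivation is the standard route and is correct; the alternative minimax argument and the smooth special case you sketch are also fine. One small caveat: as stated, the lemma only assumes $h$ is convex \emph{on} $\mathcal{X}$, so to invoke Moreau--Rockafellar cleanly you are implicitly extending $h$ to a proper convex function on $\mathbb{R}^n$ (or assuming $h$ is defined on an open superset of $\mathcal{X}$). You already flag this, and in every use the paper makes of the lemma $h$ is in fact smooth on all of $\mathbb{R}^n$, so the qualification is trivially met and your argument goes through without difficulty.
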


\begin{Cor} \label{cor:strong-convex-quadratic-optimality}
Let $\mathcal{X} \subseteq \mathbb{R}^{n}$ be a convex set. Let function $h$ be strongly convex on $\mathcal{X}$ with modulus $\alpha$ and $\mathbf{x}^{opt}$ be the global minimum of $h$ on $\mathcal{X}$. Then, $h(\mathbf{x}^{opt}) \leq h(\mathbf{x}) - \frac{\alpha}{2} \Vert \mathbf{x}^{opt} - \mathbf{x}\Vert^{2}$ for all $\mathbf{x}\in \mathcal{X}$.
\end{Cor}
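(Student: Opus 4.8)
The plan is to combine the first-order optimality condition for convex minimization (Lemma \ref{lm:first-order-optimality}) with the strong-convexity lower bound (Lemma \ref{lm:strong-convex}), taking care to use \emph{the same} subgradient vector in both. First I would note that a strongly convex function is in particular convex: since $h$ is strongly convex with modulus $\alpha$, the function $h(\mathbf{x}) - \tfrac{\alpha}{2}\Vert\mathbf{x}\Vert^2$ is convex, and adding back the convex function $\tfrac{\alpha}{2}\Vert\mathbf{x}\Vert^2$ shows $h$ is convex on $\mathcal{X}$. Hence Lemma \ref{lm:first-order-optimality} applies to $h$ at its global minimizer $\mathbf{x}^{opt}$.

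Next, apply Lemma \ref{lm:first-order-optimality} to obtain a subgradient $\mathbf{d}\in\partial h(\mathbf{x}^{opt})$ with $\mathbf{d}^T(\mathbf{x}-\mathbf{x}^{opt})\geq 0$ for all $\mathbf{x}\in\mathcal{X}$. Then apply Lemma \ref{lm:strong-convex} with this same $\mathbf{d}$, evaluating the inequality at the pair $(\mathbf{x}^{opt},\mathbf{x})$ in the roles of $(\mathbf{x},\mathbf{y})$, which gives
\[
h(\mathbf{x}) \;\geq\; h(\mathbf{x}^{opt}) + \mathbf{d}^T(\mathbf{x}-\mathbf{x}^{opt}) + \frac{\alpha}{2}\Vert \mathbf{x}-\mathbf{x}^{opt}\Vert^2
\]
for all $\mathbf{x}\in\mathcal{X}$. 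Dropping the nonnegative middle term using the optimality inequality and rearranging yields $h(\mathbf{x}^{opt}) \leq h(\mathbf{x}) - \tfrac{\alpha}{2}\Vert\mathbf{x}^{opt}-\mathbf{x}\Vert^2$, which is the claim.

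There is essentially no hard step here; the only point requiring care is consistency of the subgradient $\mathbf{d}$ across the two lemmas — Lemma \ref{lm:strong-convex} holds for \emph{all} subgradients at $\mathbf{x}^{opt}$, so it certainly holds for the particular one produced by Lemma \ref{lm:first-order-optimality}, and this is what makes the two bounds chain together cleanly.
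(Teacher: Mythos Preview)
Your proposal is correct and follows essentially the same approach as the paper: invoke Lemma~\ref{lm:first-order-optimality} to produce a subgradient $\mathbf{d}\in\partial h(\mathbf{x}^{opt})$ with $\mathbf{d}^T(\mathbf{x}-\mathbf{x}^{opt})\geq 0$, plug that same $\mathbf{d}$ into Lemma~\ref{lm:strong-convex}, and drop the nonnegative term. Your added remark that strong convexity implies convexity (so that Lemma~\ref{lm:first-order-optimality} is applicable) is a minor clarification the paper leaves implicit, but otherwise the arguments coincide.
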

\begin{IEEEproof}
Fix $\mathbf{x}\in \mathcal{X}$. By Lemma \ref{lm:first-order-optimality}, there exists $\mathbf{d} \in \partial h(\mathbf{x}^{opt})$ such that $\mathbf{d}^T(\mathbf{x} - \mathbf{x}^{opt}) \geq 0$. By Lemma \ref{lm:strong-convex}, we also have
\begin{align*}
h(\mathbf{x}) &\geq h(\mathbf{x}^{opt}) +  \mathbf{d}^{T} (\mathbf{x} - \mathbf{x}^{opt}) + \frac{\alpha}{2} \Vert  \mathbf{x} -\mathbf{x}^{opt} \Vert^{2}\\
&\overset{(a)}{\geq} h(\mathbf{x}^{opt}) + \frac{\alpha}{2} \Vert  \mathbf{x} -\mathbf{x}^{opt} \Vert^{2}
\end{align*}
where $(a)$ follows from the fact that $\mathbf{d}^T(\mathbf{x} - \mathbf{x}^{opt}) \geq 0$.
\end{IEEEproof}

\subsection{Properties of the Virtual Queues}

The following preliminary results (Lemmas \ref{lm:virtual-queue}-\ref{lm:obj-diff-bound-from-strong-duality}) are similar to those proven for a different algorithm in our prior technical report \cite{YuNeely15ArXivGeneralConvex}.  The corresponding proofs for the current algorithm are similar. For convenience to the reader, we include proofs for these lemmas.

\begin{Lem} [Lemma 3 in \cite{YuNeely15ArXivGeneralConvex}]\label{lm:virtual-queue} In Algorithm \ref{alg:new-alg}, we have
\begin{enumerate}
\item At each iteration $t\in\{0,1,2,\ldots\}$, $Q_k(t)\geq 0$ for all $k\in\{1,2,\ldots,m\}$.
\item At each iteration $t\in\{0,1,2,\ldots\}$, $Q_{k}(t) + g_{k}(\mathbf{x}(t-1))\geq 0$ for all $k\in\{1,2\ldots, m\}$.
\item At iteration $t=0$, $\Vert \mathbf{Q}(0)\Vert^2 \leq \Vert \mathbf{g}(\mathbf{x}(-1))\Vert^2$. At each iteration $t\in\{1,2,\ldots\}$,  $\Vert \mathbf{Q}(t)\Vert^2 \geq \Vert \mathbf{g}(\mathbf{x}(t-1))\Vert^2$.
\end{enumerate}
\end{Lem}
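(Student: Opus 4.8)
The plan is to establish the three claims in the stated order, each by a short direct argument built on the queue recursion $Q_k(t+1) = \max\{-g_k(\mathbf{x}(t)),\, Q_k(t) + g_k(\mathbf{x}(t))\}$ together with the initialization $Q_k(0) = \max\{0,\, -g_k(\mathbf{x}(-1))\}$.

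For Claim 1 (nonnegativity of $Q_k(t)$), I would induct on $t$. The base case $t=0$ is immediate, since $Q_k(0)$ is a maximum of $0$ with another quantity. For the inductive step, I would use the elementary fact that $\max\{a,b\} \geq \frac{1}{2}(a+b)$ for all reals $a,b$; applying this with $a = -g_k(\mathbf{x}(t))$ and $b = Q_k(t) + g_k(\mathbf{x}(t))$ gives $Q_k(t+1) \geq \frac{1}{2}Q_k(t) \geq 0$ by the induction hypothesis. (Equivalently, one may split into the cases $g_k(\mathbf{x}(t))\geq 0$ and $g_k(\mathbf{x}(t))<0$; in each case one of the two arguments of the max is manifestly nonnegative.)

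For Claim 2, the point is that $-g_k(\mathbf{x}(t-1))$ is always one of the two arguments of the max defining $Q_k(t)$ — via the recursion for $t\geq 1$, and via the initialization for $t=0$ (where the relevant primal iterate is $\mathbf{x}(-1)$). Hence $Q_k(t) \geq -g_k(\mathbf{x}(t-1))$, which rearranges to $Q_k(t) + g_k(\mathbf{x}(t-1)) \geq 0$. For Claim 3 I would treat $t=0$ and $t\geq 1$ separately. At $t=0$, a brief case check on $Q_k(0) = \max\{0, -g_k(\mathbf{x}(-1))\}$ shows $Q_k(0)^2 \leq g_k(\mathbf{x}(-1))^2$ (with equality when $g_k(\mathbf{x}(-1))\leq 0$, and $Q_k(0)=0$ otherwise); summing over $k$ gives $\Vert\mathbf{Q}(0)\Vert^2 \leq \Vert\mathbf{g}(\mathbf{x}(-1))\Vert^2$. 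For $t\geq 1$, the key observation is that $Q_k(t)$ dominates \emph{both} arguments of its defining max: directly $Q_k(t)\geq -g_k(\mathbf{x}(t-1))$, and, since $Q_k(t-1)\geq 0$ by Claim 1, also $Q_k(t)\geq Q_k(t-1)+g_k(\mathbf{x}(t-1))\geq g_k(\mathbf{x}(t-1))$. Combining these, $Q_k(t)\geq |g_k(\mathbf{x}(t-1))|$, so $Q_k(t)^2\geq g_k(\mathbf{x}(t-1))^2$; summing over $k$ gives the claim.

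No step here is a genuine obstacle; the argument is entirely elementary. The only points requiring mild care are that Claim 3 for $t\geq 1$ invokes Claim 1 (to discard the nonnegative term $Q_k(t-1)$), so the three parts must be proved in the given order, and that the initialization has a different form from the recursion, so the $t=0$ base cases must be checked on their own rather than folded into the general step.
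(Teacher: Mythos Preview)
Your proposal is correct and follows essentially the same route as the paper: induction for part 1, the observation that $-g_k(\mathbf{x}(t-1))$ is always an argument of the max for part 2, and the sign-based comparison of $Q_k(t)$ with $g_k(\mathbf{x}(t-1))$ (invoking part 1 for the $t\geq 1$ case) for part 3. The only cosmetic differences are your use of $\max\{a,b\}\geq\tfrac12(a+b)$ in place of the paper's case split in part 1, and your ``dominates both arguments'' phrasing in part 3 instead of the paper's explicit case analysis; neither constitutes a genuinely different argument.
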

\begin{IEEEproof} 
\begin{enumerate}
\item Fix $k\in\{1,2,\ldots, m\}$. Note that $Q_{k}(0) + g_{k}(\mathbf{x}(-1))\geq 0$ by initialization rule $Q_{k}(0) = \max\{0, -g_{k}(\mathbf{x}(-1))\}$. Assume that $Q_k(t) \geq 0$ and consider time $t+1$. If $g_k(\mathbf{x}(t)) \geq 0$, then $Q_k(t+1) = \max\{-g_{k}(\mathbf{x}(t)), Q_{k}(t) + g_{k}(\mathbf{x}(t))\} \geq Q_{k}(t) + g_{k}(\mathbf{x}(t))\geq 0$. If $g_k(\mathbf{x}(t))<0$, then $Q_k(t+1) = \max\{-g_{k}(\mathbf{x}(t)), Q_{k}(t) + g_{k}(\mathbf{x}(t))\}\geq -g_{k}(\mathbf{x}(t)) > 0$.  Thus, $Q_k(t+1) \geq 0$. The result follows by induction.
\item Fix $k\in\{1,2,\ldots, m\}$. Note that $Q_{k}(0) + g_{k}(\mathbf{x}(-1))\geq 0$ by initialization rule $Q_{k}(0) = \max\{0, -g_{k}(\mathbf{x}(-1))\} \geq -g_{k}(\mathbf{x}(-1))$. For $t\geq 1$, by the virtual update equation, we have $Q_{k}(t) = \max\{-g_{k}(\mathbf{x}(t-1)), Q_{k}(t-1) + g_{k}(\mathbf{x}(t-1))\}\geq -g_{k}(\mathbf{x}(t-1))$, which implies that $Q_{k}(t) + g_{k}(\mathbf{x}(t-1)) \geq 0$.
\item 
\begin{itemize}
\item For $t=0$. Fix $k\in\{1,2,\ldots, m\}$. Consider the cases $g_k(\mathbf{x}(-1))\geq 0$ and $g_k(\mathbf{x}(-1))<0$ separately. If $g_k(\mathbf{x}(-1)) \geq 0$, then $ Q_k(0) = \max\{0,-g_{k}(\mathbf{x}(-1))\} =0 \leq g_k(\mathbf{x}(-1))$. If $g_k(\mathbf{x}(-1)) < 0$, then $Q_k(0) = \max\{0, -g_{k}(\mathbf{x}(t-1))\} = -g_k(\mathbf{x}(-1))>0$. Thus, in both cases, we have $\vert Q_k(0) \vert \leq \vert g_k(\mathbf{x}(-1))\vert$.  Squaring both sides and summing over $k\in\{1,2,\ldots,m\}$ yields $\Vert \mathbf{Q}(0)\Vert^2 \leq \Vert \mathbf{g}(\mathbf{x}(-1))\Vert^2$.  
\item For $t\geq 1$. Fix $k\in\{1,2,\ldots, m\}$. Consider the cases $g_k(\mathbf{x}(t-1))\geq 0$ and $g_k(\mathbf{x}(t-1))<0$ separately. If $g_k(\mathbf{x}(t-1)) \geq 0$, then $Q_k(t) = \max\{-g_{k}(\mathbf{x}(t-1)), Q_{k}(t-1) + g_{k}(\mathbf{x}(t-1))\} \geq Q_{k}(t-1) + g_{k}(\mathbf{x}(t-1)) \overset{(a)}{\geq} g_k(\mathbf{x}(t-1)) \geq 0$ where (a) follows from part (1). If $g_k(\mathbf{x}(t-1)) < 0$, then $Q_k(t) = \max\{-g_{k}(\mathbf{x}(t-1)), Q_{k}(t-1) + g_{k}(\mathbf{x}(t-1))\} \geq -g_k(\mathbf{x}(t-1)) > 0$. Thus, in both cases, we have $\vert Q_k(t) \vert \geq \vert g_k(\mathbf{x}(t-1))\vert$.  Squaring both sides and summing over $k\in\{1,2,\ldots,m\}$ yields $\Vert \mathbf{Q}(t)\Vert^2 \geq \Vert \mathbf{g}(\mathbf{x}(t-1))\Vert^2$.
\end{itemize}
\end{enumerate}
\end{IEEEproof}

\begin{Lem}[Lemma 7 in \cite{YuNeely15ArXivGeneralConvex}]\label{lm:queue-constraint-inequality}
Let $\mathbf{Q}(t), t\in\{0,1,\ldots\}$ be the sequence generated by Algorithm \ref{alg:new-alg}.  
For any $t\geq 1$, 
\[ Q_k(t) \geq   \displaystyle{\sum_{\tau=0}^{t-1} g_k(\mathbf{x}(\tau))}, \forall k\in\{1,2,\ldots,m\}. \]
\end{Lem}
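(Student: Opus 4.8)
The plan is to exploit the one-sided bound hidden inside the $\max$ in the virtual queue update and then telescope. For each fixed $k\in\{1,2,\ldots,m\}$ and each $\tau\geq 0$, the update rule $Q_k(\tau+1) = \max\{-g_k(\mathbf{x}(\tau)), Q_k(\tau) + g_k(\mathbf{x}(\tau))\}$ immediately gives the scalar inequality $Q_k(\tau+1) \geq Q_k(\tau) + g_k(\mathbf{x}(\tau))$, since a maximum is at least as large as either of its arguments. This is the only structural fact about the update that is needed.

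Next I would rearrange this as $Q_k(\tau+1) - Q_k(\tau) \geq g_k(\mathbf{x}(\tau))$ and sum over $\tau\in\{0,1,\ldots,t-1\}$ for any $t\geq 1$. The left-hand side telescopes to $Q_k(t) - Q_k(0)$, yielding $Q_k(t) - Q_k(0) \geq \sum_{\tau=0}^{t-1} g_k(\mathbf{x}(\tau))$. Equivalently, the same conclusion follows by a routine induction on $t$: the base case $t=1$ is exactly $Q_k(1)\geq Q_k(0)+g_k(\mathbf{x}(0))\geq g_k(\mathbf{x}(0))$, and the inductive step adds $Q_k(t+1)\geq Q_k(t)+g_k(\mathbf{x}(t))$ to the hypothesis.

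Finally I would invoke part (1) of Lemma \ref{lm:virtual-queue}, which guarantees $Q_k(0)\geq 0$, to drop the $Q_k(0)$ term and obtain $Q_k(t) \geq \sum_{\tau=0}^{t-1} g_k(\mathbf{x}(\tau))$ for all $t\geq 1$, as claimed. There is no real obstacle here; the only point requiring a moment's care is making sure the $\max$ is lower-bounded by the correct argument (the running term $Q_k(\tau)+g_k(\mathbf{x}(\tau))$ rather than $-g_k(\mathbf{x}(\tau))$) and that the nonnegativity of the initial queue is cited rather than assumed.
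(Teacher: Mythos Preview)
Your proposal is correct and follows essentially the same argument as the paper: lower-bound the $\max$ in the update by $Q_k(\tau)+g_k(\mathbf{x}(\tau))$, telescope the resulting increments over $\tau\in\{0,\ldots,t-1\}$, and drop $Q_k(0)$ using its nonnegativity.
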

\begin{IEEEproof}
Fix $k\in\{1,2,\ldots,m\}$ and $t \geq 1$.  For any $\tau \in \{0, \ldots, t-1\}$ the update rule of Algorithm \ref{alg:new-alg} gives: 
\begin{align*}
Q_k(\tau+1) &= \max\{-g_{k}(\mathbf{x}(\tau)), Q_k(\tau)+g_k(\mathbf{x}(\tau))\} \\
&\geq Q_k(\tau) + g_k(\mathbf{x}(\tau)). 
\end{align*}
Hence, $Q_k(\tau+1) - Q_k(\tau) \geq g_k(\mathbf{x}(\tau))$.  
Summing over $\tau \in \{0, \ldots, t-1\}$ and using $Q_k(0)\geq 0$ gives the result. 
\end{IEEEproof}

\subsection{Properties of the Drift}

Let $\mathbf{Q}(t) = \big[ Q_1(t), \ldots, Q_m(t)\big]^T$ be the vector of virtual queue backlogs.  Define  $L(t) = \frac{1}{2} \Vert \mathbf{Q}(t)\Vert^2$. The function $L(t)$ shall be called a \emph{Lyapunov function}. Define the {Lyapunov drift} as 
\begin{align}
\Delta (t) = L(t+1) - L(t) = \frac{1}{2} [ \Vert \mathbf{Q}(t+1)\Vert^{2} - \Vert \mathbf{Q}(t)\Vert^{2}]. \label{eq:def-drift}
\end{align}

\begin{Lem} [Lemma 4 in \cite{YuNeely15ArXivGeneralConvex}]\label{lm:drift} At each iteration $t\in\{0,1,2,\ldots\}$ in Algorithm \ref{alg:new-alg}, an upper bound of the Lyapunov drift is given by
\begin{align}
\Delta(t) \leq \mathbf{Q}^T(t) \mathbf{g}(\mathbf{x}(t))  +\Vert \mathbf{g}(\mathbf{x}(t))\Vert^2.  \label{eq:drift}
\end{align}
\end{Lem}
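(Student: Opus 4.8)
The plan is to bound the drift term by term, working with each virtual queue component $Q_k$ separately and then summing. Fix an iteration $t \in \{0,1,2,\ldots\}$ and an index $k \in \{1,\ldots,m\}$, and write $g_k$ as shorthand for $g_k(\mathbf{x}(t))$. The starting point is the queue update rule $Q_k(t+1) = \max\{-g_k,\, Q_k(t) + g_k\}$.

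The key elementary observation is that for any two real numbers $a,b$, the quantity $\max\{a,b\}$ equals either $a$ or $b$, so $(\max\{a,b\})^2$ equals either $a^2$ or $b^2$; in particular $(\max\{a,b\})^2 \le a^2 + b^2$. Applying this with $a = -g_k$ and $b = Q_k(t) + g_k$ gives
\begin{align*}
Q_k(t+1)^2 \le g_k^2 + \big(Q_k(t) + g_k\big)^2 = Q_k(t)^2 + 2 Q_k(t) g_k + 2 g_k^2 .
\end{align*}
Hence $Q_k(t+1)^2 - Q_k(t)^2 \le 2 Q_k(t) g_k + 2 g_k^2$ for every $k$.

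To finish, I would sum this inequality over $k \in \{1,\ldots,m\}$ and divide by $2$, using the definition $\Delta(t) = \tfrac{1}{2}[\|\mathbf{Q}(t+1)\|^2 - \|\mathbf{Q}(t)\|^2] = \tfrac{1}{2}\sum_{k=1}^m [Q_k(t+1)^2 - Q_k(t)^2]$, together with $\sum_{k=1}^m Q_k(t) g_k(\mathbf{x}(t)) = \mathbf{Q}^T(t)\mathbf{g}(\mathbf{x}(t))$ and $\sum_{k=1}^m g_k(\mathbf{x}(t))^2 = \|\mathbf{g}(\mathbf{x}(t))\|^2$. This yields exactly the claimed bound $\Delta(t) \le \mathbf{Q}^T(t)\mathbf{g}(\mathbf{x}(t)) + \|\mathbf{g}(\mathbf{x}(t))\|^2$.

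There is essentially no serious obstacle here: the only step requiring care is the squaring inequality $(\max\{a,b\})^2 \le a^2 + b^2$, and even that is immediate once one notes the max is attained at one of the two arguments. (Note that the nonnegativity of $Q_k(t)$ from Lemma~\ref{lm:virtual-queue} is not actually needed for this particular bound, though it is consistent with it.) The rest is bookkeeping: expanding the square and recognizing the inner-product and norm terms after summation.
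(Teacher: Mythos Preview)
Your proof is correct and arrives at exactly the same componentwise inequality $Q_k(t+1)^2 \le Q_k(t)^2 + 2Q_k(t)g_k + 2g_k^2$ that the paper establishes, but by a noticeably shorter route. The paper introduces an auxiliary increment $\tilde{g}_k(\mathbf{x}(t))$ so that $Q_k(t+1) = Q_k(t) + \tilde{g}_k(\mathbf{x}(t))$, squares, and then uses the case-based identity $Q_k(t)[\tilde{g}_k - g_k] = -[\tilde{g}_k + g_k][\tilde{g}_k - g_k]$ to replace $\tilde{g}_k$ by $g_k$ in the cross term before dropping $-\tfrac{1}{2}\tilde{g}_k^2$. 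Your single observation $(\max\{a,b\})^2 \le a^2 + b^2$ bypasses all of that machinery and yields the bound in one line. The paper's approach has the minor advantage of making explicit the exact slack (namely $\tfrac{1}{2}\tilde{g}_k^2$) that is discarded, which could in principle be reused elsewhere; your approach is simply cleaner for the purpose at hand. Your remark that nonnegativity of $Q_k(t)$ is not needed here is also correct and worth noting.
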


\begin{IEEEproof} 
The virtual queue update equations $Q_k(t+1) = \max\{-g_k(\mathbf{x}(t)), Q_k(t) + g_k(\mathbf{x}(t))\}, \forall k\in \{1,2,\ldots,m\}$ can be rewritten as
\begin{align}
Q_k(t+1) = Q_k(t) + \tilde{g}_k(\mathbf{x}(t)), \forall k\in \{1,2,\ldots,m\}, \label{eq:modified-virtual-queue}
\end{align} 
where 
\begin{equation*}
\tilde{g}_k(\mathbf{x}(t)) = \left \{ \begin{array}{cl}  g_k(\mathbf{x}(t)), & \text{if}~Q_k(t) + g_k(\mathbf{x}(t)) \geq -g_k(\mathbf{x}(t))\\
-Q_k(t) - g_k(\mathbf{x}(t)),  & \text{else} \end{array} \right. \forall k.
\end{equation*}

Fix $k\in\{1,2,\ldots, m\}$. Squaring both sides of \eqref{eq:modified-virtual-queue} and dividing by $2$ yield: 
\begin{align*}
&\frac{1}{2}[Q_k (t+1) ]^2 \\
= &\frac{1}{2}[Q_k(t)]^2 + \frac{1}{2}[\tilde{g}_k(\mathbf{x}(t))]^2 +  Q_k (t) \tilde{g}_k(\mathbf{x}(t)) \\
= &\frac{1}{2}[Q_k(t)]^2 + \frac{1}{2}[\tilde{g}_k(\mathbf{x}(t))]^2 +  Q_k (t) g_k(\mathbf{x}(t)) +  Q_k (t)[\tilde{g}_k(\mathbf{x}(t)) -g_k(\mathbf{x}(t))]  \\
\overset{(a)}{=}& \frac{1}{2}[Q_k(t)]^2 + \frac{1}{2}[\tilde{g}_k(\mathbf{x}(t))]^2 + Q_k (t) g_k(\mathbf{x}(t)) \\& - [ \tilde{g}_k (\mathbf{x}(t)) + g_k(\mathbf{x}(t))] [ \tilde{g}_k(\mathbf{x}(t)) -g_k(\mathbf{x}(t)) ]\\
=& \frac{1}{2}[Q_k(t)]^2 - \frac{1}{2}[\tilde{g}_k(\mathbf{x}(t))]^2 + Q_k (t) g_k(\mathbf{x}(t))  + [g_k(\mathbf{x}(t))]^2 \\
\leq &\frac{1}{2}[Q_k(t)]^2 + Q_k (t) g_k(\mathbf{x}(t))  + [g_k(\mathbf{x}(t))]^2,
\end{align*}
where $(a)$ follows from the fact that $Q_k (t) [ \tilde{g}_k (\mathbf{x}(t)) -g_k(\mathbf{x}(t)) ] = - [\tilde{g}_k (\mathbf{x}(t)) +g_k(\mathbf{x}(t))] \cdot [ \tilde{g}_k (\mathbf{x}(t)) -g_k(\mathbf{x}(t)) ]$, which can be shown by considering $\tilde{g}_k (\mathbf{x}(t)) = g_k(\mathbf{x}(t))$ and $\tilde{g}_k (\mathbf{x}(t)) \neq g_k(\mathbf{x}(t))$.
Summing over $k\in\{1,2,\ldots,m\}$ yields 
\begin{align*}
\frac{1}{2}\Vert\mathbf{Q}(t+1)\Vert^{2} \leq \frac{1}{2}\Vert\mathbf{Q}(t)\Vert^{2}  + \mathbf{Q}^T(t ) \mathbf{g}(\mathbf{x}(t)) + \Vert \mathbf{g}(\mathbf{x}(t))\Vert^2. 
\end{align*}

Rearranging the terms yields the desired result.
\end{IEEEproof}

\subsection{Properties from Strong Duality}

\begin{Lem}[Lemma 8 in \cite{YuNeely15ArXivGeneralConvex}] \label{lm:obj-diff-bound-from-strong-duality}
Let $\mathbf{x}^{\ast}$ be an optimal solution of problem \eqref{eq:program-objective}-\eqref{eq:program-set-constraint} and $\boldsymbol{\lambda}^\ast$ be a Lagrange multiplier vector satisfying Assumption \ref{as:strong-duality}. Let $\mathbf{x}(t), \mathbf{Q}(t), t\in\{0,1,\ldots\}$ be sequences generated by Algorithm \ref{alg:new-alg}. Then,
\begin{align*}
\sum_{\tau=0}^{t-1} f(\mathbf{x}(\tau)) \geq t f(\mathbf{x}^\ast) -  \Vert \boldsymbol{\lambda}^\ast\Vert \Vert \mathbf{Q}(t)\Vert, \quad \forall t\geq 1. 
\end{align*}
\end{Lem}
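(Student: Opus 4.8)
The plan is to exploit strong duality together with the key inequality from Lemma~\ref{lm:queue-constraint-inequality}. By Assumption~\ref{as:strong-duality}, the dual function satisfies $q(\boldsymbol{\lambda}^\ast) = f(\mathbf{x}^\ast)$, and since $q(\boldsymbol{\lambda}^\ast) = \min_{\mathbf{x}\in\mathcal{X}}\{f(\mathbf{x}) + (\boldsymbol{\lambda}^\ast)^T\mathbf{g}(\mathbf{x})\}$, we have for every $\tau$
\begin{align*}
f(\mathbf{x}(\tau)) + (\boldsymbol{\lambda}^\ast)^T\mathbf{g}(\mathbf{x}(\tau)) \geq q(\boldsymbol{\lambda}^\ast) = f(\mathbf{x}^\ast),
\end{align*}
so $f(\mathbf{x}(\tau)) \geq f(\mathbf{x}^\ast) - (\boldsymbol{\lambda}^\ast)^T\mathbf{g}(\mathbf{x}(\tau))$.

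Next I would sum this over $\tau\in\{0,1,\ldots,t-1\}$ to obtain
\begin{align*}
\sum_{\tau=0}^{t-1} f(\mathbf{x}(\tau)) \geq t f(\mathbf{x}^\ast) - (\boldsymbol{\lambda}^\ast)^T\sum_{\tau=0}^{t-1}\mathbf{g}(\mathbf{x}(\tau)).
\end{align*}
Because $\boldsymbol{\lambda}^\ast \geq \mathbf{0}$ componentwise, and Lemma~\ref{lm:queue-constraint-inequality} gives $Q_k(t) \geq \sum_{\tau=0}^{t-1} g_k(\mathbf{x}(\tau))$ for each $k$, it follows that $(\boldsymbol{\lambda}^\ast)^T \sum_{\tau=0}^{t-1}\mathbf{g}(\mathbf{x}(\tau)) = \sum_{k=1}^m \lambda_k^\ast \sum_{\tau=0}^{t-1} g_k(\mathbf{x}(\tau)) \leq \sum_{k=1}^m \lambda_k^\ast Q_k(t) = (\boldsymbol{\lambda}^\ast)^T\mathbf{Q}(t)$. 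Substituting this upper bound yields
\begin{align*}
\sum_{\tau=0}^{t-1} f(\mathbf{x}(\tau)) \geq t f(\mathbf{x}^\ast) - (\boldsymbol{\lambda}^\ast)^T\mathbf{Q}(t).
\end{align*}

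Finally I would apply the Cauchy-Schwarz inequality, $(\boldsymbol{\lambda}^\ast)^T\mathbf{Q}(t) \leq \Vert\boldsymbol{\lambda}^\ast\Vert\,\Vert\mathbf{Q}(t)\Vert$, to reach the claimed bound. I do not anticipate a serious obstacle here: the only subtle points are making sure the sign conditions line up (using $\boldsymbol{\lambda}^\ast\geq\mathbf{0}$ to preserve the direction of the inequality when passing from $\sum_\tau \mathbf{g}(\mathbf{x}(\tau))$ to $\mathbf{Q}(t)$ via Lemma~\ref{lm:queue-constraint-inequality}), and noting that Lemma~\ref{lm:queue-constraint-inequality} requires $t\geq 1$, which matches the range of the statement. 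The use of $Q_k(t)\geq 0$ from Lemma~\ref{lm:virtual-queue} is implicitly folded into Lemma~\ref{lm:queue-constraint-inequality} already.
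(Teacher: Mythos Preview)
Your proposal is correct and follows essentially the same approach as the paper's proof: invoke strong duality to get $f(\mathbf{x}(\tau)) \geq f(\mathbf{x}^\ast) - (\boldsymbol{\lambda}^\ast)^T\mathbf{g}(\mathbf{x}(\tau))$, sum over $\tau$, apply Lemma~\ref{lm:queue-constraint-inequality} together with $\boldsymbol{\lambda}^\ast\geq\mathbf{0}$ to replace $\sum_\tau g_k(\mathbf{x}(\tau))$ by $Q_k(t)$, and finish with Cauchy--Schwarz. The paper's argument is identical in structure and in the lemmas it cites.
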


\begin{IEEEproof}
Define Lagrangian dual function $q(\boldsymbol{\lambda}) = \min\limits_{\mathbf{x}\in \mathcal{X}}\{f(\mathbf{x})+ \sum_{k=1}^m \lambda_k g_k(\mathbf{x})\}$. For all $\tau\in\{0,1,\ldots\}$, by Assumption \ref{as:strong-duality}, we have
\begin{align*}
f(\mathbf{x}^{\ast}) = q(\boldsymbol{\lambda}^{\ast}) \overset{(a)}{\leq}  f(\mathbf{x}(\tau)) + \sum_{k=1}^m \lambda_k^\ast g_k(\mathbf{x}(\tau)),
\end{align*} 
where (a) follows the definition of $q(\boldsymbol{\lambda}^{\ast})$.

Thus, we have $ f(\mathbf{x}(\tau))  \geq f(\mathbf{x}^\ast) -\sum_{k=1}^m \lambda_k^\ast g_k(\mathbf{x}(\tau)),\forall \tau \in\{0,1,\ldots\}$. Summing over $\tau\in \{0,1,\ldots, t\}$ yields 
\begin{align}
\sum_{\tau=0}^{t-1} f(\mathbf{x}(\tau)) \geq &t f(\mathbf{x}^\ast) -  \sum_{\tau=0}^{t-1} \sum_{k=1}^m\lambda_k^\ast g_k(\mathbf{x}(\tau))\nonumber \\
=&t f(\mathbf{x}^\ast)  - \sum_{k=1}^m  \lambda_k^\ast \Big[\sum_{\tau=0}^{t-1}g_k(\mathbf{x}(\tau))\Big] \nonumber\\
\overset{(a)}{\geq}& t f(\mathbf{x}^\ast)  -\sum_{k=1}^m  \lambda_k^\ast Q_k(t) \nonumber\\
\overset{(b)}{\geq}& t f(\mathbf{x}^\ast)  -\Vert \boldsymbol{\lambda}^\ast\Vert \Vert \mathbf{Q}(t)\Vert, \nonumber
\end{align}
where $(a)$ follows from Lemma \ref{lm:queue-constraint-inequality} and the fact that $\lambda_k^\ast \geq 0, \forall k\in\{1,2,\ldots, m\}$; and $(b)$ follows from the Cauchy-Schwartz inequality. \end{IEEEproof}

\section{Convergence Rate Analysis of Algorithm \ref{alg:new-alg} }

This section analyzes the convergence rate of Algorithm \ref{alg:new-alg} for problem \eqref{eq:program-objective}-\eqref{eq:program-set-constraint}.

\subsection{An Upper Bound of the Drift-Plus-Penalty Expression}

\begin{Lem}\label{lm:dpp-bound}
Let $\mathbf{x}^{\ast}$ be an optimal solution of problem \eqref{eq:program-objective}-\eqref{eq:program-set-constraint}. For all $t\geq 0$ in Algorithm \ref{alg:new-alg}, we have
\begin{align*}
&\Delta(t) + f(\mathbf{x}(t)) \\
\leq &f(\mathbf{x}^{\ast}) + \frac{1}{2\gamma} [\Vert \mathbf{x} ^{\ast}- \mathbf{x}(t-1)\Vert^{2} - \Vert \mathbf{x}^{\ast} - \mathbf{x}(t)\Vert^{2}] +\frac{1}{2} [ \Vert \mathbf{g}(\mathbf{x}(t))\Vert^{2} -  \Vert \mathbf{g}(\mathbf{x}(t-1))\Vert^{2} ] \\ &+ \frac{1}{2} \big[ \beta^{2} + L_{f} + \Vert \mathbf{Q}(t) \Vert \Vert \mathbf{L}_{\mathbf{g}}\Vert + C \Vert \mathbf{L}_{\mathbf{g}} \Vert-\frac{1}{\gamma} \big] \Vert \mathbf{x}(t) - \mathbf{x}(t-1)\Vert^{2},
\end{align*}
where $\beta, L_{f}$, $\mathbf{L}_{\mathbf{g}}$ and $C$ are defined in Assumption \ref{as:basic}.
\end{Lem}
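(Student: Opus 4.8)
The plan is to combine the drift bound from Lemma \ref{lm:drift} with the descent lemma applied to the surrogate function $\phi(\mathbf{x}) = f(\mathbf{x}) + [\mathbf{Q}(t) + \mathbf{g}(\mathbf{x}(t-1))]^{T}\mathbf{g}(\mathbf{x})$, exploiting the fact that the projection step of Algorithm \ref{alg:new-alg} is the exact minimizer over $\mathcal{X}$ of a strongly convex quadratic. First I would invoke Lemma \ref{lm:drift} to get $\Delta(t) + f(\mathbf{x}(t)) \leq f(\mathbf{x}(t)) + \mathbf{Q}^{T}(t)\mathbf{g}(\mathbf{x}(t)) + \Vert\mathbf{g}(\mathbf{x}(t))\Vert^{2}$ and then recognize $f(\mathbf{x}(t)) + \mathbf{Q}^{T}(t)\mathbf{g}(\mathbf{x}(t)) = \phi(\mathbf{x}(t)) - \mathbf{g}(\mathbf{x}(t-1))^{T}\mathbf{g}(\mathbf{x}(t))$. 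Using the identity $\mathbf{a}^{T}(\mathbf{a}-\mathbf{b}) = \tfrac{1}{2}\Vert\mathbf{a}\Vert^{2} - \tfrac{1}{2}\Vert\mathbf{b}\Vert^{2} + \tfrac{1}{2}\Vert\mathbf{a}-\mathbf{b}\Vert^{2}$ with $\mathbf{a} = \mathbf{g}(\mathbf{x}(t))$ and $\mathbf{b} = \mathbf{g}(\mathbf{x}(t-1))$, together with the Lipschitz bound $\Vert\mathbf{g}(\mathbf{x}(t)) - \mathbf{g}(\mathbf{x}(t-1))\Vert \leq \beta\Vert\mathbf{x}(t) - \mathbf{x}(t-1)\Vert$ from Assumption \ref{as:basic}, this peels off the terms $\tfrac{1}{2}[\Vert\mathbf{g}(\mathbf{x}(t))\Vert^{2} - \Vert\mathbf{g}(\mathbf{x}(t-1))\Vert^{2}]$ and $\tfrac{\beta^{2}}{2}\Vert\mathbf{x}(t) - \mathbf{x}(t-1)\Vert^{2}$, reducing the claim to an upper bound on $\phi(\mathbf{x}(t))$.

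Next I would note that $\phi$ is convex — each multiplier $Q_{k}(t) + g_{k}(\mathbf{x}(t-1))$ is nonnegative by Lemma \ref{lm:virtual-queue}(2) and each $g_{k}$ is convex — and smooth on $\mathcal{X}$ with modulus at most $L_{\phi}(t) := L_{f} + \sum_{k=1}^{m}[Q_{k}(t) + g_{k}(\mathbf{x}(t-1))]L_{g_{k}}$; by Cauchy--Schwarz, the triangle inequality, and $\Vert\mathbf{g}(\mathbf{x}(t-1))\Vert \leq C$ this is at most $L_{f} + (\Vert\mathbf{Q}(t)\Vert + C)\Vert\mathbf{L}_{\mathbf{g}}\Vert$. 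The descent lemma (Lemma \ref{lm:descent-lemma}) then gives $\phi(\mathbf{x}(t)) \leq \phi(\mathbf{x}(t-1)) + \mathbf{d}(t)^{T}(\mathbf{x}(t) - \mathbf{x}(t-1)) + \tfrac{L_{\phi}(t)}{2}\Vert\mathbf{x}(t) - \mathbf{x}(t-1)\Vert^{2}$, since $\mathbf{d}(t) = \nabla\phi(\mathbf{x}(t-1))$.

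The key structural observation is that $\mathbf{x}(t) = \mathcal{P}_{\mathcal{X}}[\mathbf{x}(t-1) - \gamma\mathbf{d}(t)]$ is equivalently the minimizer over $\mathcal{X}$ of $h(\mathbf{x}) := \mathbf{d}(t)^{T}(\mathbf{x} - \mathbf{x}(t-1)) + \tfrac{1}{2\gamma}\Vert\mathbf{x} - \mathbf{x}(t-1)\Vert^{2}$, which is strongly convex with modulus $1/\gamma$. Applying Corollary \ref{cor:strong-convex-quadratic-optimality} with comparison point $\mathbf{x}^{\ast}$ gives $h(\mathbf{x}(t)) \leq h(\mathbf{x}^{\ast}) - \tfrac{1}{2\gamma}\Vert\mathbf{x}^{\ast} - \mathbf{x}(t)\Vert^{2}$, i.e.\ $\mathbf{d}(t)^{T}(\mathbf{x}(t) - \mathbf{x}(t-1)) + \tfrac{1}{2\gamma}\Vert\mathbf{x}(t) - \mathbf{x}(t-1)\Vert^{2} \leq \mathbf{d}(t)^{T}(\mathbf{x}^{\ast} - \mathbf{x}(t-1)) + \tfrac{1}{2\gamma}\Vert\mathbf{x}^{\ast} - \mathbf{x}(t-1)\Vert^{2} - \tfrac{1}{2\gamma}\Vert\mathbf{x}^{\ast} - \mathbf{x}(t)\Vert^{2}$. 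Substituting this into the descent bound and then using convexity of $\phi$ in the form $\phi(\mathbf{x}(t-1)) + \mathbf{d}(t)^{T}(\mathbf{x}^{\ast} - \mathbf{x}(t-1)) \leq \phi(\mathbf{x}^{\ast})$ yields $\phi(\mathbf{x}(t)) \leq \phi(\mathbf{x}^{\ast}) + \tfrac{1}{2\gamma}[\Vert\mathbf{x}^{\ast} - \mathbf{x}(t-1)\Vert^{2} - \Vert\mathbf{x}^{\ast} - \mathbf{x}(t)\Vert^{2}] + \tfrac{1}{2}\bigl(L_{\phi}(t) - \tfrac{1}{\gamma}\bigr)\Vert\mathbf{x}(t) - \mathbf{x}(t-1)\Vert^{2}$. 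Finally, since $\mathbf{x}^{\ast}$ is feasible ($g_{k}(\mathbf{x}^{\ast}) \leq 0$) and the multipliers are nonnegative, $\phi(\mathbf{x}^{\ast}) \leq f(\mathbf{x}^{\ast})$. Combining this with the reduction from the first paragraph and merging the two $\Vert\mathbf{x}(t) - \mathbf{x}(t-1)\Vert^{2}$ coefficients, namely $\tfrac{\beta^{2}}{2} + \tfrac{1}{2}\bigl(L_{\phi}(t) - \tfrac{1}{\gamma}\bigr)$, gives precisely the stated inequality.

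I expect the main obstacle to be two bookkeeping-heavy but conceptually small points: correctly establishing the smoothness modulus of $\phi$ (which relies crucially on Lemma \ref{lm:virtual-queue}(2) so that the weights in the weighted sum of smooth $g_{k}$'s are nonnegative and the moduli simply add), and recognizing the projection update as the exact argmin of a strongly convex quadratic so that Corollary \ref{cor:strong-convex-quadratic-optimality} applies with $\mathbf{x}^{\ast}$ as the comparison point; everything else is arithmetic arranged so that the $\mathbf{x}$-distance and $\mathbf{g}$-norm terms telescope.
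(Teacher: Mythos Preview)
Your proposal is correct and follows essentially the same approach as the paper's proof: both combine the drift bound (Lemma \ref{lm:drift}), the descent lemma applied to $\phi$ (the paper applies it to $f$ and each weighted $g_k$ separately and sums, which is equivalent), the interpretation of the projection as the minimizer of a $\frac{1}{\gamma}$-strongly convex quadratic together with Corollary \ref{cor:strong-convex-quadratic-optimality}, the convexity of $\phi$, the polarization identity for $\mathbf{g}(\mathbf{x}(t-1))^{T}\mathbf{g}(\mathbf{x}(t))$, the Lipschitz bound on $\mathbf{g}$, and finally Cauchy--Schwarz plus $\Vert\mathbf{g}\Vert\leq C$ to control the smoothness modulus of $\phi$. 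The only difference is the order in which you assemble these pieces (you start from the drift bound and reduce to bounding $\phi(\mathbf{x}(t))$, whereas the paper first establishes the $\phi$ inequality and adds the drift bound at the end), which is immaterial.
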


\begin{IEEEproof} 
Fix $t\geq 0$. The projection operator can be reinterpreted as an optimization problem as follows:
\begin{align}
&\mathbf{x}(t) =  \mathcal{P}_{\mathcal{X}} \left[ \mathbf{x}(t-1) - \gamma \mathbf{d}(t)\right] \nonumber\\
\overset{(a)}{\Leftrightarrow}\quad& \mathbf{x}(t)= \argmin_{\mathbf{x}\in \mathcal{X}} \left[ \big\Vert \mathbf{x} - [\mathbf{x}(t-1)- \gamma \mathbf{d}(t)] \big\Vert^{2}\right] \nonumber\\
\Leftrightarrow\quad& \mathbf{x}(t) = \argmin_{\mathbf{x}\in \mathcal{X}} \left[ \Vert \mathbf{x} -\mathbf{x}(t-1) \Vert^{2} + 2\gamma \mathbf{d}^{T}(t)[\mathbf{x} -\mathbf{x}(t-1)] + \gamma^2 \Vert \mathbf{d}(t) \Vert^{2}\right] \nonumber\\
\overset{(b)}{\Leftrightarrow} \quad&  \mathbf{x}(t)= \argmin_{\mathbf{x}\in \mathcal{X}} \left[ f(\mathbf{x}(t-1))+  \sum_{k=1}^m [ Q_{k}(t) + g_k (\mathbf{x}(t-1))] g_{k}(\mathbf{x}(t-1))+   \mathbf{d}^{T}(t)[\mathbf{x} -\mathbf{x}(t-1)] +  \frac{1}{2\gamma}\Vert \mathbf{x} -\mathbf{x}(t-1) \Vert^{2} \right] \nonumber\\
\overset{(c)}{\Leftrightarrow} \quad& \mathbf{x}(t)= \argmin_{\mathbf{x}\in \mathcal{X}} \left[ \phi(\mathbf{x}(t-1)) +  \nabla^{T} \phi(\mathbf{x}(t-1)) [\mathbf{x} - \mathbf{x}(t-1)] + \frac{1}{2\gamma} \Vert \mathbf{x} - \mathbf{x}(t-1)\Vert^{2}\right],\label{eq:pf-lm-primal-update-eq1}
\end{align}
where (a) follows from the definition of the projection on to a convex set; (b) follows from the fact the minimizing solution does not change when we remove constant term $\gamma^2 \Vert \mathbf{d}(t)\Vert^{2}$, multiply positive constant $\frac{1}{2\gamma}$ and add constant term $f(\mathbf{x}(t-1))+ [ \mathbf{Q}(t) + \mathbf{g} (\mathbf{x}(t-1))]^{T} \mathbf{g}(\mathbf{x}(t-1)) $ in the objective function; and (c) follows by defining 
\begin{align}
\phi(\mathbf{x}) =  f(\mathbf{x}) +  [\mathbf{Q}(t) + \mathbf{g}(\mathbf{x}(t-1))]^{T} \mathbf{g}(\mathbf{x}). \label{eq:pf-dpp-bound-def-phi} 
\end{align}
Note that part 2 in Lemma \ref{lm:virtual-queue} implies that $\mathbf{Q}(t) + \mathbf{g}(\mathbf{x}(t-1))$ is component-wise nonnegative for all $k\in\{1,2,\ldots, m\}$. Hence, function $\phi(\mathbf{x}) =  f(\mathbf{x}) +  [\mathbf{Q}(t) + \mathbf{g}(\mathbf{x}(t-1))]^{T} \mathbf{g}(\mathbf{x})$ is convex with respect to $\mathbf{x}$ on $\mathcal{X}$.

Since $ \frac{1}{2\gamma} \Vert \mathbf{x} - \mathbf{x}(t-1)\Vert^{2}$ is strongly convex with respect to $\mathbf{x}$ with modulus $\frac{1}{\gamma}$, it follows that
\begin{align*}
\phi(\mathbf{x}(t-1)) +  \nabla^{T} \phi(\mathbf{x}(t-1)) [\mathbf{x} - \mathbf{x}(t-1)] + \frac{1}{2\gamma} \Vert \mathbf{x} - \mathbf{x}(t-1)\Vert^{2}
\end{align*}
 is strongly convex with respect to $\mathbf{x}$ with modulus $\frac{1}{\gamma}$.  

Since $\mathbf{x}(t)$ is chosen to minimize the above strongly convex function, by Corollary \ref{cor:strong-convex-quadratic-optimality},  we have
\begin{align}
 &\phi(\mathbf{x}(t-1)) +  \nabla^{T} \phi(\mathbf{x}(t-1)) [\mathbf{x}(t) - \mathbf{x}(t-1)] + \frac{1}{2\gamma} \Vert \mathbf{x}(t) - \mathbf{x}(t-1)\Vert^{2} \nonumber \\
 \leq & \phi(\mathbf{x}(t-1)) +  \nabla^{T} \phi(\mathbf{x}(t-1))  [\mathbf{x}^{\ast} - \mathbf{x}(t-1)] + \frac{1}{2\gamma} \Vert \mathbf{x}^{\ast} - \mathbf{x}(t-1)\Vert^{2} - \frac{1}{2\gamma} \Vert \mathbf{x}^{\ast} - \mathbf{x}(t)\Vert^{2} \nonumber \\
\overset{(a)}{\leq} &\phi(\mathbf{x}^{\ast}) + \frac{1}{2\gamma} [\Vert \mathbf{x}^{\ast} - \mathbf{x}(t-1)\Vert^{2} - \Vert \mathbf{x}^{\ast} - \mathbf{x}(t)\Vert^{2}] \nonumber \\
\overset{(b)}{=} & f(\mathbf{x}^{\ast}) + \underbrace{[\mathbf{Q}(t) + \mathbf{g}(\mathbf{x}(t-1))]^T\mathbf{g}(\mathbf{x}^\ast)}_{\leq0}  + \frac{1}{2\gamma} [\Vert \mathbf{x}^{\ast} - \mathbf{x}(t-1)\Vert^{2} -\Vert \mathbf{x}^{\ast} - \mathbf{x}(t)\Vert^{2} ]\nonumber \\
\overset{(c)}{\leq}& f(\mathbf{x}^{\ast}) + \frac{1}{2\gamma} [\Vert \mathbf{x}^{\ast} - \mathbf{x}(t-1)\Vert^{2} -\Vert \mathbf{x}^{\ast} - \mathbf{x}(t)\Vert^{2}], \label{eq:pf-dpp-bound-eq1}
\end{align}
where (a) follows from the fact that $\phi(\mathbf{x})$ is convex with respect to $\mathbf{x}$ on $\mathcal{X}$; (b) follows from the definition of function $\phi(\mathbf{x})$ in \eqref{eq:pf-dpp-bound-def-phi}; and (c) follows by using the fact that $g_{k}(\mathbf{x}^{\ast})\leq 0$ and $Q_{k}(t) + g_{k}(\mathbf{x}(t-1))\geq 0$ (i.e., part 2 in Lemma \ref{lm:virtual-queue}) for all $k\in\{1,2,\ldots,m\}$ to eliminate the term marked by an underbrace.

Recall that $f(\mathbf{x})$ is smooth on $\mathcal{X}$ with modulus $L_{f}$ by Assumption \ref{as:basic}. By Lemma \ref{lm:descent-lemma}, we have
\begin{align}
f(\mathbf{x}(t))  \leq f(\mathbf{x}(t-1)) + \nabla^{T} f(\mathbf{x}(t-1))[\mathbf{x}(t) - \mathbf{x}(t-1)] +\frac{L_{f}}{2} \Vert \mathbf{x}(t) - \mathbf{x}(t-1)\Vert^{2}. \label{eq:pf-dpp-bound-eq2}
\end{align}
Recall that each $g_{k}(\mathbf{x})$ is smooth on $\mathcal{X}$ with modulus $L_{g_{k}}$ by Assumption \ref{as:basic}. Thus, $[Q_{k}(t) + g_{k}(\mathbf{x}(t-1))] g_{k}(\mathbf{x})$ is smooth with modulus $[Q_{k}(t) + g_{k}(\mathbf{x}(t-1))] L_{g_{k}}$. By Lemma \ref{lm:descent-lemma}, we have
\begin{align}
&[Q_{k}(t) + g_{k}(\mathbf{x}(t-1)) ] g_{k}(\mathbf{x}(t)) \nonumber \\
\leq &[Q_{k}(t) + g_{k}(\mathbf{x}(t-1)) ] g_{k}(\mathbf{x}(t-1)) + [Q_{k}(t) + g_{k}(\mathbf{x}(t-1))] \nabla^{T} g_{k}(\mathbf{x}(t-1))  [\mathbf{x}(t) - \mathbf{x}(t-1)] \nonumber \\ &+  \frac{[Q_{k}(t) + g_{k}(\mathbf{x}(t-1))] L_{g_{k}}}{2} \Vert \mathbf{x}(t) - \mathbf{x}(t-1)\Vert^{2} . \label{eq:pf-dpp-bound-eq3}
\end{align}
Summing \eqref{eq:pf-dpp-bound-eq3} over $k\in\{1,2,\ldots, m\}$ yields
\begin{align}
&[\mathbf{Q}(t) + \mathbf{g}(\mathbf{x}(t-1))]^{T} \mathbf{g}(\mathbf{x}(t))\\
\leq &[\mathbf{Q}(t) + \mathbf{g}(\mathbf{x}(t-1))]^{T} \mathbf{g}(\mathbf{x}(t-1))+ \sum_{k=1}^{m}[Q_{k}(t) + g_{k}(\mathbf{x}(t-1))] \nabla^{T} g_{k}(\mathbf{x}(t-1))  [\mathbf{x}(t) - \mathbf{x}(t-1)]  \nonumber \\ &+  \frac{[\mathbf{Q}(t) + \mathbf{g}(\mathbf{x}(t-1))]^{T} \mathbf{L}_{\mathbf{g}}}{2} \Vert \mathbf{x}(t) - \mathbf{x}(t-1)\Vert^{2}. \label{eq:pf-dpp-bound-eq4}
\end{align}
Summing up \eqref{eq:pf-dpp-bound-eq2} and \eqref{eq:pf-dpp-bound-eq4} together yields
\begin{align}
&f(\mathbf{x}(t)) + [\mathbf{Q}(t) + \mathbf{g}(\mathbf{x}(t-1))]^{T} \mathbf{g}(\mathbf{x}(t))   \nonumber \\
\leq & f(\mathbf{x}(t-1)) + [\mathbf{Q}(t) + \mathbf{g}(\mathbf{x}(t-1))]^{T} \mathbf{g}(\mathbf{x}(t-1)) +\nabla^{T} f(\mathbf{x}(t-1)) [\mathbf{x}(t) - \mathbf{x}(t-1)] \nonumber \\ &+ \sum_{k=1}^{m}[Q_{k}(t) + g_{k}(\mathbf{x}(t-1))] \nabla^{T} g_{k}(\mathbf{x}(t-1))  [\mathbf{x}(t) - \mathbf{x}(t-1)] + \frac{L_{f} + [\mathbf{Q}(t) + \mathbf{g}(\mathbf{x}(t-1))]^{T} \mathbf{L}_{\mathbf{g}}}{2} \Vert \mathbf{x}(t) - \mathbf{x}(t-1)\Vert^{2} \nonumber \\
\overset{(a)}{=} & \phi(\mathbf{x}(t-1)) + \nabla^T \phi(\mathbf{x}(t-1)) [\mathbf{x}(t) - \mathbf{x}(t-1)] + \frac{L_{f} + [\mathbf{Q}(t) + \mathbf{g}(\mathbf{x}(t-1))]^{T} \mathbf{L}_{\mathbf{g}}}{2} \Vert \mathbf{x}(t) - \mathbf{x}(t-1)\Vert^{2},\label{eq:pf-dpp-bound-eq5}
\end{align}
where (a) follows from the definition of function $\phi(\mathbf{x})$ in \eqref{eq:pf-dpp-bound-def-phi}.

Substituting \eqref{eq:pf-dpp-bound-eq1} into \eqref{eq:pf-dpp-bound-eq5}  yields
 \begin{align}
&f(\mathbf{x}(t)) + [\mathbf{Q}(t) + \mathbf{g}(\mathbf{x}(t-1))]^{T} \mathbf{g}(\mathbf{x}(t))   \nonumber \\
\leq &f(\mathbf{x}^{\ast}) + \frac{1}{2\gamma}[\Vert \mathbf{x}^{\ast} - \mathbf{x}(t-1)\Vert^{2} - \Vert \mathbf{x}^{\ast} - \mathbf{x}(t)\Vert^{2}] + \frac{1}{2}\big[L_{f} + [\mathbf{Q}(t) + \mathbf{g}(\mathbf{x}(t-1))]^{T} \mathbf{L}_{\mathbf{g}} -\frac{1}{\gamma} \big] \Vert \mathbf{x}(t) - \mathbf{x}(t-1)\Vert^{2}. \label{eq:pf-dpp-bound-eq6}
\end{align}
 Note that $\mathbf{u}_{1}^{T} \mathbf{u}_{2} = \frac{1}{2} [\Vert  \mathbf{u}_{1}\Vert^{2}  + \Vert \mathbf{u}_{2}\Vert^{2} - \Vert \mathbf{u}_{1} - \mathbf{u}_{2}\Vert^{2} ]$ for any $\mathbf{u}_{1}, \mathbf{u}_{2}\in \mathbb{R}^{m}$. Thus, we have \begin{align}
[\mathbf{g}(\mathbf{x}(t-1))]^T \mathbf{g}(\mathbf{x}(t))  =  \frac{1}{2} [ \Vert \mathbf{g}(\mathbf{x}(t-1))\Vert^{2}  + \Vert \mathbf{g}(\mathbf{x}(t))\Vert^{2} - \Vert \mathbf{g}(\mathbf{x}(t-1))-\mathbf{g}(\mathbf{x}(t))\Vert^{2} ]. \label{eq:pf-dpp-bound-eq7} 
\end{align} 
Substituting \eqref{eq:pf-dpp-bound-eq7} into \eqref{eq:pf-dpp-bound-eq6} and rearranging terms  yields
\begin{align*}
 &f(\mathbf{x}(t)) + \mathbf{Q}^{T}(t)\mathbf{g}(\mathbf{x}(t)) \nonumber \\
 \leq & f(\mathbf{x}^{\ast}) + \frac{1}{2\gamma} [\Vert \mathbf{x} ^{\ast}- \mathbf{x}(t-1)\Vert^{2} - \Vert \mathbf{x}^{\ast} - \mathbf{x}(t)\Vert^{2}]  + \frac{1}{2}\big[L_{f} + [\mathbf{Q}(t) + \mathbf{g}(\mathbf{x}(t-1))]^{T} \mathbf{L}_{\mathbf{g}} -\frac{1}{\gamma} \big] \Vert \mathbf{x}(t) - \mathbf{x}(t-1)\Vert^{2}   \\ &+ \frac{1}{2}  \Vert \mathbf{g}(\mathbf{x}(t-1))-\mathbf{g}(\mathbf{x}(t))\Vert^{2} - \frac{1}{2}  \Vert \mathbf{g}(\mathbf{x}(t-1))\Vert^{2} - \frac{1}{2}  \Vert \mathbf{g}(\mathbf{x}(t))\Vert^{2}\\
 \overset{(a)}{\leq} &f(\mathbf{x}^{\ast}) + \frac{1}{2\gamma} [\Vert \mathbf{x} ^{\ast}- \mathbf{x}(t-1)\Vert^{2} -\Vert \mathbf{x}^{\ast} - \mathbf{x}(t)\Vert^{2}] + \frac{1}{2} \big[ \beta^{2} + L_{f} + [\mathbf{Q}(t) + \mathbf{g}(\mathbf{x}(t-1))]^{T} \mathbf{L}_{\mathbf{g}} -\frac{1}{\gamma} \big] \Vert \mathbf{x}(t) - \mathbf{x}(t-1)\Vert^{2} \\ &- \frac{1}{2}  \Vert \mathbf{g}(\mathbf{x}(t-1))\Vert^{2} - \frac{1}{2}  \Vert \mathbf{g}(\mathbf{x}(t))\Vert^{2},
 \end{align*}
where (a) follows from the fact that $\Vert \mathbf{g}(\mathbf{x}(t-1)) - \mathbf{g}(\mathbf{x}(t))\Vert \leq \beta \Vert \mathbf{x}(t) - \mathbf{x}(t-1)\Vert$, which further follows from the assumption that $\mathbf{g}(\mathbf{x})$ is Lipschitz continuous with modulus $\beta$.

Summing \eqref{eq:drift} with the above inequality yields
\begin{align*}
&\Delta(t) + f(\mathbf{x}(t)) \\
\leq &f(\mathbf{x}^{\ast}) + \frac{1}{2\gamma} [\Vert \mathbf{x} ^{\ast}- \mathbf{x}(t-1)\Vert^{2} - \Vert \mathbf{x}^{\ast} - \mathbf{x}(t)\Vert^{2}] +\frac{1}{2} [ \Vert \mathbf{g}(\mathbf{x}(t))\Vert^{2} -  \Vert \mathbf{g}(\mathbf{x}(t-1))\Vert^{2} ] \\ &+ \frac{1}{2} \big[ \beta^{2} + L_{f} + [\mathbf{Q}(t) + \mathbf{g}(\mathbf{x}(t-1))]^{T} \mathbf{L}_{\mathbf{g}} -\frac{1}{\gamma} \big] \Vert \mathbf{x}(t) - \mathbf{x}(t-1)\Vert^{2}\\
\overset{(a)}{\leq} &f(\mathbf{x}^{\ast}) + \frac{1}{2\gamma} [\Vert \mathbf{x} ^{\ast}- \mathbf{x}(t-1)\Vert^{2} - \Vert \mathbf{x}^{\ast} - \mathbf{x}(t)\Vert^{2}] +\frac{1}{2} [ \Vert \mathbf{g}(\mathbf{x}(t))\Vert^{2} -  \Vert \mathbf{g}(\mathbf{x}(t-1))\Vert^{2} ] \\ &+ \frac{1}{2} \big[ \beta^{2} + L_{f} + \Vert \mathbf{Q}(t) \Vert \Vert \mathbf{L}_{\mathbf{g}}\Vert + C \Vert \mathbf{L}_{\mathbf{g}} \Vert-\frac{1}{\gamma} \big] \Vert \mathbf{x}(t) - \mathbf{x}(t-1)\Vert^{2},
\end{align*}
where (a) follows from the fact that $ [\mathbf{Q}(t) + \mathbf{g}(\mathbf{x}(t-1))]^{T} \mathbf{L}_{\mathbf{g}}  \leq \Vert \mathbf{Q}(t) + \mathbf{g}(\mathbf{x}(t-1)) \Vert \Vert \mathbf{L}_\mathbf{g}\Vert \leq [ \Vert \mathbf{Q}(t)\Vert + \Vert \mathbf{g}(\mathbf{x}(t))\Vert] \Vert \mathbf{L}_\mathbf{g}\Vert \leq \Vert \mathbf{Q}(t)\Vert \Vert \mathbf{L}_\mathbf{g}\Vert + C \Vert \mathbf{L}_\mathbf{g}\Vert$, where the first step follows from Caucy-Schwartz inequality, the second follows from the triangular inequality of Euclidean norm $\Vert \cdot\Vert$ and the third  follows from the fact that $\Vert \mathbf{g}(\mathbf{x})\Vert \leq C$ for all $\mathbf{x}\in \mathcal{X}$, i.e., Assumption \ref{as:basic}.
\end{IEEEproof}

\begin{Lem}\label{lm:conditional-dpp-bound}
Let $\mathbf{x}^{\ast}$ be an optimal solution of problem \eqref{eq:program-objective}-\eqref{eq:program-set-constraint} and $\boldsymbol{\lambda}^\ast$ be a Lagrange multiplier vector satisfying Assumption \ref{as:strong-duality}. Define 
\begin{align}
D = \beta^{2} + L_{f} + 2 \Vert \boldsymbol{\lambda}^{\ast}\Vert \Vert \mathbf{L}_{\mathbf{g}}\Vert + 2C \Vert \mathbf{L}_{\mathbf{g}}\Vert, \label{eq:D}
\end{align}
where $\beta$, $L_{f}$, $\mathbf{L}_{\mathbf{g}}$ and $C$ are defined in Assumption \ref{as:basic}.  If $\gamma >0$ in Algorithm \ref{alg:new-alg} satisfies
\begin{align}
D+ \Vert \mathbf{L}_{\mathbf{g}}\Vert \frac{R}{\sqrt{\gamma}} - \frac{1}{\gamma} \leq 0, \label{eq:kappa-selection-condition}
\end{align}
where $R$ is defined in Assumption \ref{as:basic}, e.g., 
\begin{align}
0 < \gamma \leq \frac{1}{(\Vert \mathbf{L}_{\mathbf{g}}\Vert R + \sqrt{D})^{2}}, \label{eq:kappa}
\end{align}
then we have
\begin{enumerate}
\item At each iteration $t\in\{0,1,2,\ldots\}$, $\Vert \mathbf{Q}(t)\Vert \leq 2 \Vert \boldsymbol{\lambda}^{\ast}\Vert + \frac{R}{\sqrt{\gamma}} + C$.
\item At each iteration $t\in\{0,1,2,\ldots\}$, $\Delta(t) + f(\mathbf{x}(t)) \leq f(\mathbf{x}^{\ast}) + \frac{1}{2\gamma} [\Vert \mathbf{x} ^{\ast}- \mathbf{x}(t-1)\Vert^{2} - \Vert \mathbf{x}^{\ast} - \mathbf{x}(t)\Vert^{2}] +\frac{1}{2} [ \Vert \mathbf{g}(\mathbf{x}(t))\Vert^{2} -  \Vert \mathbf{g}(\mathbf{x}(t-1))\Vert^{2} ]$.
\end{enumerate}
\end{Lem}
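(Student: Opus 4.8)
The plan is to establish parts 1 and 2 \emph{simultaneously} by induction on $t$, since they are tightly coupled: the queue bound in part 1 is exactly what makes the coefficient of $\Vert \mathbf{x}(t) - \mathbf{x}(t-1)\Vert^{2}$ in Lemma~\ref{lm:dpp-bound} nonpositive and hence droppable, which yields part 2 at the same iteration; and part 2 summed over $\{0,\ldots,t\}$ feeds back to give part 1 at iteration $t+1$. (As a preliminary, one checks that \eqref{eq:kappa} implies \eqref{eq:kappa-selection-condition}: setting $u=1/\sqrt{\gamma}$, \eqref{eq:kappa-selection-condition} reads $u^{2}-\Vert\mathbf{L}_{\mathbf{g}}\Vert R\,u - D\geq 0$, whose positive root is $\tfrac12\big(\Vert\mathbf{L}_{\mathbf{g}}\Vert R+\sqrt{\Vert\mathbf{L}_{\mathbf{g}}\Vert^{2}R^{2}+4D}\big)\leq \Vert\mathbf{L}_{\mathbf{g}}\Vert R+\sqrt{D}$ using $\sqrt{a^{2}+4b}\leq a+2\sqrt{b}$; all that follows then uses only \eqref{eq:kappa-selection-condition}.)

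The \emph{key observation} is that if $\Vert \mathbf{Q}(\tau)\Vert \leq 2\Vert\boldsymbol{\lambda}^{\ast}\Vert + R/\sqrt{\gamma} + C$, then by the definition \eqref{eq:D} of $D$ we get $\beta^{2}+L_{f}+\Vert\mathbf{Q}(\tau)\Vert\Vert\mathbf{L}_{\mathbf{g}}\Vert+C\Vert\mathbf{L}_{\mathbf{g}}\Vert-\tfrac{1}{\gamma}\leq D+\Vert\mathbf{L}_{\mathbf{g}}\Vert R/\sqrt{\gamma}-\tfrac{1}{\gamma}\leq 0$, so the last term of Lemma~\ref{lm:dpp-bound} at iteration $\tau$ can be discarded, giving precisely the inequality claimed in part 2 at iteration $\tau$. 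Hence part 1 at iteration $\tau$ implies part 2 at iteration $\tau$; in particular, once part 1 is proven for all $t$, part 2 follows for all $t$, so it suffices to prove part 1.

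I would prove part 1 by induction. The base case $t=0$ is immediate from Lemma~\ref{lm:virtual-queue}(3) and Assumption~\ref{as:basic}, which give $\Vert\mathbf{Q}(0)\Vert\leq\Vert\mathbf{g}(\mathbf{x}(-1))\Vert\leq C$. For the inductive step, assume part 1 holds for $0,\ldots,t$; then by the key observation part 2 holds for $\tau=0,\ldots,t$. Summing those inequalities, the $\frac{1}{2\gamma}[\Vert\mathbf{x}^{\ast}-\mathbf{x}(\tau-1)\Vert^{2}-\Vert\mathbf{x}^{\ast}-\mathbf{x}(\tau)\Vert^{2}]$ and $\frac12[\Vert\mathbf{g}(\mathbf{x}(\tau))\Vert^{2}-\Vert\mathbf{g}(\mathbf{x}(\tau-1))\Vert^{2}]$ terms telescope, while $\sum_{\tau=0}^{t}\Delta(\tau)=\tfrac12\Vert\mathbf{Q}(t+1)\Vert^{2}-\tfrac12\Vert\mathbf{Q}(0)\Vert^{2}$. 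Lower-bounding $\sum_{\tau=0}^{t}f(\mathbf{x}(\tau))$ by Lemma~\ref{lm:obj-diff-bound-from-strong-duality} applied at index $t+1$ (so the linear term involves $\Vert\mathbf{Q}(t+1)\Vert$, which is what we need), cancelling the $(t+1)f(\mathbf{x}^{\ast})$ terms, using $\Vert\mathbf{Q}(0)\Vert\leq\Vert\mathbf{g}(\mathbf{x}(-1))\Vert$ to cancel the two $\Vert\mathbf{g}(\mathbf{x}(-1))\Vert^{2}$ terms, dropping $-\Vert\mathbf{x}^{\ast}-\mathbf{x}(t)\Vert^{2}\leq 0$, and bounding $\Vert\mathbf{x}^{\ast}-\mathbf{x}(-1)\Vert\leq R$ and $\Vert\mathbf{g}(\mathbf{x}(t))\Vert\leq C$, I obtain $\tfrac12\Vert\mathbf{Q}(t+1)\Vert^{2}\leq \Vert\boldsymbol{\lambda}^{\ast}\Vert\,\Vert\mathbf{Q}(t+1)\Vert+\tfrac{R^{2}}{2\gamma}+\tfrac{C^{2}}{2}$, i.e. the quadratic inequality $\Vert\mathbf{Q}(t+1)\Vert^{2}-2\Vert\boldsymbol{\lambda}^{\ast}\Vert\,\Vert\mathbf{Q}(t+1)\Vert-R^{2}/\gamma-C^{2}\leq 0$. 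Solving it and using $\sqrt{a+b+c}\leq\sqrt{a}+\sqrt{b}+\sqrt{c}$ gives $\Vert\mathbf{Q}(t+1)\Vert\leq\Vert\boldsymbol{\lambda}^{\ast}\Vert+\sqrt{\Vert\boldsymbol{\lambda}^{\ast}\Vert^{2}+R^{2}/\gamma+C^{2}}\leq 2\Vert\boldsymbol{\lambda}^{\ast}\Vert+R/\sqrt{\gamma}+C$, closing the induction.

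The main obstacle is recognizing the bootstrapping structure and keeping the bookkeeping in the telescoped sum exact: invoking Lemma~\ref{lm:obj-diff-bound-from-strong-duality} at index $t+1$ (not $t$) so that $\Vert\mathbf{Q}(t+1)\Vert$ appears in the linear term, and ensuring the $\Vert\mathbf{g}(\mathbf{x}(-1))\Vert^{2}$ coming from $L(0)$ cancels exactly the residual $\mathbf{g}$-term left by the telescope, so that only the controlled constants $R^{2}/\gamma$ and $C^{2}$ survive on the right-hand side. Everything else is routine manipulation.
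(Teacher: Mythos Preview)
Your proposal is correct and follows essentially the same approach as the paper's proof: verify that \eqref{eq:kappa} implies \eqref{eq:kappa-selection-condition}, then establish the queue bound and the drift-plus-penalty inequality jointly by induction, using Lemma~\ref{lm:dpp-bound}, the telescoping sum, Lemma~\ref{lm:obj-diff-bound-from-strong-duality} at index $t+1$, and the quadratic inequality in $\Vert\mathbf{Q}(t+1)\Vert$. The only cosmetic difference is that the paper takes part~2 as the induction hypothesis and recovers part~1 at $t+1$ from it, whereas you take part~1 as the hypothesis and derive part~2 via your ``key observation'' before summing; the two formulations are logically equivalent.
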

\begin{proof}
Before the main proof, we verify that $\gamma$ given by \eqref{eq:kappa} satisfies \eqref{eq:kappa-selection-condition}. Note that we need to choose $\gamma>0$ such that
\begin{align*}
&D +  \Vert \mathbf{L}_{\mathbf{g}}\Vert \frac{R}{\sqrt{\gamma}}  - \frac{1}{\gamma} \leq 0 \\
\Leftrightarrow & D \gamma +   \Vert \mathbf{L}_{\mathbf{g}}\Vert R \sqrt{\gamma} - 1 \leq 0\\
\Leftrightarrow & 0 < \sqrt{\gamma} \leq \frac{- \Vert \mathbf{L}_{\mathbf{g}}\Vert R  + \sqrt{\Vert \mathbf{L}_{\mathbf{g}}\Vert^{2} R^{2}  + 4D}}{2D} = \frac{2}{\Vert \mathbf{L}_{\mathbf{g}}\Vert R  + \sqrt{\Vert \mathbf{L}_{\mathbf{g}}\Vert^{2} R^{2}  + 4D}}. \\
\end{align*}
Note that 
\begin{align*}
&\frac{2}{\Vert \mathbf{L}_{\mathbf{g}}\Vert R  + \sqrt{\Vert \mathbf{L}_{\mathbf{g}}\Vert^{2} R^{2}  + 4D}} \\
\overset{(a)}{\geq}& \frac{2}{2 \Vert \mathbf{L}_{\mathbf{g}}\Vert  R+ 2\sqrt{D}} \\
= &\frac{1}{\Vert \mathbf{L}_{\mathbf{g}}\Vert R+ \sqrt{D}},
\end{align*}
where (a) follows from the fact that $\sqrt{a+b} \leq \sqrt{a} + \sqrt{b}, \forall a,b\geq 0$.
Thus, if we choose $\sqrt{\gamma} \leq \frac{1}{\Vert \mathbf{L}_{\mathbf{g}}\Vert R+ \sqrt{D}}$, i.e., $0 <\gamma  \leq \frac{1}{( \Vert \mathbf{L}_{\mathbf{g}}\Vert R+ \sqrt{D})^{2}}$, then inequality \eqref{eq:kappa-selection-condition} holds.  

Next, we prove this lemma by induction. 
\begin{itemize}
\item Consider $t=0$. $\Vert \mathbf{Q}(0)\Vert \leq 2  \Vert \boldsymbol{\lambda}^{\ast}\Vert +  \frac{R}{\sqrt{\gamma}} + C$ follows from the fact that $ \Vert \mathbf{Q}(0) \Vert \overset{(a)}{\leq} \Vert \mathbf{g}(\mathbf{x}(-1))\Vert \overset{(b)}{\leq} C$, where (a) follows from part 3 in Lemma \ref{lm:virtual-queue} and (b) follows from Assumption \ref{as:basic}. Thus, the first part in this lemma holds at iteration $t=0$. Note that
\begin{align}
&\beta^{2} + L_{f} + \Vert \mathbf{Q}(0) \Vert \Vert \mathbf{L}_{\mathbf{g}}\Vert + C \Vert \mathbf{L}_{\mathbf{g}} \Vert-\frac{1}{\gamma} \nonumber \\
\overset{(a)}{\leq}&\beta^{2} + L_{f} + \big(2  \Vert \boldsymbol{\lambda}^{\ast}\Vert +  \frac{R}{\sqrt{\gamma}} + C\big)\Vert \mathbf{L}_{\mathbf{g}}\Vert + C \Vert \mathbf{L}_{\mathbf{g}} \Vert-\frac{1}{\gamma} \nonumber \\
=& \beta^{2} + L_{f} + 2  \Vert \boldsymbol{\lambda}^{\ast}\Vert \Vert \mathbf{L}_{\mathbf{g}}\Vert + 2C \Vert \mathbf{L}_{\mathbf{g}}\Vert + \Vert \mathbf{L}_{\mathbf{g}}\Vert  \frac{R}{\sqrt{\gamma}} -\frac{1}{\gamma} \nonumber \\
\overset{(b)}{=}& D + \Vert \mathbf{L}_{\mathbf{g}}\Vert \frac{R}{\sqrt{\gamma}} -\frac{1}{\gamma} \nonumber \\
\overset{(c)}{\leq}& 0, \label{eq:pf-conditional-dpp-bound-eq1}
\end{align}
where (a) follows from $\Vert \mathbf{Q}(0)\Vert \leq 2  \Vert \boldsymbol{\lambda}^{\ast}\Vert +  \frac{R}{\sqrt{\gamma}} + C$; (b) follows from the definition of $D$ in \eqref{eq:D}; and (c) follows from \eqref{eq:kappa-selection-condition}, i.e., the selection rule of $\gamma$.  

Applying Lemma \ref{lm:dpp-bound} at iteration $t=0$ yields
\begin{align*}
\Delta(0) + f(\mathbf{x}(0)) \leq &f(\mathbf{x}^{\ast}) + \frac{1}{2\gamma} [\Vert \mathbf{x} ^{\ast}- \mathbf{x}(-1)\Vert^{2} - \Vert \mathbf{x}^{\ast} - \mathbf{x}(0)\Vert^{2}] +\frac{1}{2} [ \Vert \mathbf{g}(\mathbf{x}(0))\Vert^{2} -  \Vert \mathbf{g}(\mathbf{x}(-1))\Vert^{2} ] \\ &+ \frac{1}{2} \big[ \beta^{2} + L_{f} + \Vert \mathbf{Q}(0) \Vert \Vert \mathbf{L}_{\mathbf{g}}\Vert + C \Vert \mathbf{L}_{\mathbf{g}} \Vert-\frac{1}{\gamma} \big] \Vert \mathbf{x}(0) - \mathbf{x}(-1)\Vert^{2} \\
\overset{(a)}{\leq} & f(\mathbf{x}^{\ast}) + \frac{1}{2\gamma} [\Vert \mathbf{x} ^{\ast}- \mathbf{x}(-1)\Vert^{2} - \Vert \mathbf{x}^{\ast} - \mathbf{x}(0)\Vert^{2}] +\frac{1}{2} [ \Vert \mathbf{g}(\mathbf{x}(0))\Vert^{2} -  \Vert \mathbf{g}(\mathbf{x}(-1))\Vert^{2} ],
\end{align*}
where (a) follows from \eqref{eq:pf-conditional-dpp-bound-eq1}. Thus, the second part in this lemma holds at iteration $t=0$.

\item Assume that for all $\tau\in\{0,1,\ldots,t\}$, we have 
\begin{align}
\Delta(\tau) + f(\mathbf{x}(\tau)) \leq f(\mathbf{x}^{\ast}) + \frac{1}{2\gamma} [\Vert \mathbf{x} ^{\ast}- \mathbf{x}(\tau-1)\Vert^{2} - \Vert \mathbf{x}^{\ast} - \mathbf{x}(\tau)\Vert^{2}] +\frac{1}{2} [ \Vert \mathbf{g}(\mathbf{x}(\tau))\Vert^{2} -  \Vert \mathbf{g}(\mathbf{x}(\tau-1))\Vert^{2}]. \label{eq:pf-conditional-dpp-bound-eq2}
\end{align}
Need to show that both parts in this lemma hold at iteration $t+1$.  

Summing \eqref{eq:pf-conditional-dpp-bound-eq2} over $\tau \in\{0,1,\ldots, t\}$ yields 
\begin{align*}
&\sum_{\tau=0}^{t} \Delta(\tau) + \sum_{\tau=0}^{t} f(\mathbf{x}(\tau)) \\
\leq &(t+1) f(\mathbf{x}^{\ast}) +  \frac{1}{2\gamma} \sum_{\tau=0}^{t}[\Vert \mathbf{x}^{\ast} - \mathbf{x}(\tau-1)\Vert^{2}  - \Vert \mathbf{x}^{\ast} - \mathbf{x}(\tau)\Vert^{2} ] + \frac{1}{2} \sum_{\tau=0}^{t}[\Vert \mathbf{g}(\mathbf{x}(\tau))\Vert^{2}-\Vert \mathbf{g}(\mathbf{x}(\tau-1))\Vert^{2}].
\end{align*}
Recalling that $\Delta(\tau) = L(\tau+1) - L(\tau)$ and simplifying the summations yields
\begin{align*}
&L(t+1)  - L(0) + \sum_{\tau=0}^{t} f(\mathbf{x}(\tau)) \\
 \leq &(t+1) f(\mathbf{x}^{\ast}) +  \frac{1}{2\gamma} \Vert \mathbf{x}^{\ast} - \mathbf{x}(-1)\Vert^{2} -\frac{1}{2\gamma} \Vert \mathbf{x}^{\ast} - \mathbf{x}(t)\Vert^{2}+ \frac{1}{2} \Vert \mathbf{g}(\mathbf{x}(t))\Vert^{2}  -\frac{1}{2} \Vert \mathbf{g}(\mathbf{x}(-1))\Vert^{2} \\
\leq &(t+1) f(\mathbf{x}^{\ast}) +  \frac{1}{2\gamma} \Vert \mathbf{x}^{\ast} - \mathbf{x}(-1)\Vert^{2} + \frac{1}{2} \Vert \mathbf{g}(\mathbf{x}(t))\Vert^{2}  -\frac{1}{2} \Vert \mathbf{g}(\mathbf{x}(-1))\Vert^{2}.
\end{align*}
Rearranging terms yields
\begin{align}
\sum_{\tau=0}^{t} f(\mathbf{x}(\tau)) \leq &(t+1) f(\mathbf{x}^{\ast}) +  \frac{1}{2\gamma} \Vert \mathbf{x}^{\ast} - \mathbf{x}(-1)\Vert^{2} + \frac{1}{2} \Vert \mathbf{g}(\mathbf{x}(t))\Vert^{2}  -\frac{1}{2} \Vert \mathbf{g}(\mathbf{x}(-1))\Vert^{2} + L(0) - L(t+1) \nonumber \\ 
\overset{(a)}{=}  &(t+1) f(\mathbf{x}^{\ast}) +  \frac{1}{2\gamma} \Vert \mathbf{x}^{\ast} - \mathbf{x}(-1)\Vert^{2} + \frac{1}{2} \Vert \mathbf{g}(\mathbf{x}(t))\Vert^{2}  -\frac{1}{2} \Vert \mathbf{g}(\mathbf{x}(-1))\Vert^{2} + \frac{1}{2} \Vert \mathbf{Q}(0)\Vert^{2} - \frac{1}{2} \Vert \mathbf{Q}(t+1)\Vert^{2} \nonumber \\
\overset{(b)}{\leq}& (t+1) f(\mathbf{x}^{\ast}) +  \frac{1}{2\gamma} \Vert \mathbf{x}^{\ast} - \mathbf{x}(-1)\Vert^{2} + \frac{C^{2}}{2} - \frac{1}{2} \Vert \mathbf{Q}(t+1)\Vert^{2} \nonumber\\
\overset{(c)}{\leq} &  (t+1) f(\mathbf{x}^{\ast}) +  \frac{R^{2}}{2\gamma} + \frac{C^{2}}{2}  - \frac{1}{2} \Vert \mathbf{Q}(t+1)\Vert^{2},\label{eq:pf-conditional-dpp-bound-eq3}
\end{align}
where  (a) follows from the definition that $L(0) = \frac{1}{2} \Vert \mathbf{Q}(0)\Vert^{2}$ and $L(t+1) = \frac{1}{2}\Vert \mathbf{Q}(t+1)\Vert^{2}$; (b) follows from the facts that $\Vert \mathbf{g}(\mathbf{x}(t))\Vert \leq C $ which is implied by Assumption \ref{as:basic}, and $\Vert \mathbf{Q}(0)\Vert \leq \Vert \mathbf{g}(\mathbf{x}(-1))\Vert$, i.e., part 3 in Lemma \ref{lm:virtual-queue}; and (c) follows from the fact that $\Vert \mathbf{x} - \mathbf{y}\Vert \leq R$ for all $\mathbf{x}, \mathbf{y}\in \mathcal{X}$, i.e., Assumption \ref{as:basic}.

Applying Lemma \ref{lm:obj-diff-bound-from-strong-duality} at iteration $t+1$ yields
\begin{align}
\sum_{\tau=0}^{t} f(\mathbf{x}(\tau)) \geq (t+1) f(\mathbf{x}^\ast) -  \Vert \boldsymbol{\lambda}^\ast\Vert \Vert \mathbf{Q}(t+1)\Vert.  \label{eq:pf-conditional-dpp-bound-eq4}
\end{align}

Combining \eqref{eq:pf-conditional-dpp-bound-eq3} and \eqref{eq:pf-conditional-dpp-bound-eq4}; and cancelling the common term $(t+1)f(\mathbf{x}^{\ast})$ on both sides yields
\begin{align*}
&\frac{1}{2} \Vert \mathbf{Q}(t+1)\Vert^{2} - \Vert \boldsymbol{\lambda}^{\ast}\Vert \Vert \mathbf{Q}(t+1)\Vert - \frac{R^{2}}{2\gamma}  -\frac{C^{2}}{2} \leq 0 \\
\Rightarrow & ( \Vert \mathbf{Q}(t+1)\Vert - \Vert \boldsymbol{\lambda}^{\ast}\Vert )^{2} \leq \Vert \boldsymbol{\lambda}^{\ast}\Vert^{2}  + \frac{R^{2}}{\gamma}  + C^{2}\\
\Rightarrow &\Vert \mathbf{Q}(t+1)\Vert \leq \Vert \boldsymbol{\lambda}^{\ast}\Vert +\sqrt{ \Vert \boldsymbol{\lambda}^{\ast}\Vert^{2}  + \frac{R^{2}}{\gamma}  + C^{2}} \\
\overset{(a)}{\Rightarrow} & \Vert \mathbf{Q}(t+1)\Vert \leq 2 \Vert \boldsymbol{\lambda}^{\ast}\Vert + \frac{R}{\sqrt{\gamma}} + C,
\end{align*}
where (a) follows from the basic inequality $\sqrt{a+b+c} \leq \sqrt{a} + \sqrt{b} + \sqrt{c}$ for any $a,b,c\geq 0$. Thus, the first part in this lemma holds at iteration $t+1$.

Note that
\begin{align}
&\beta^{2} + L_{f} + \Vert \mathbf{Q}(t+1) \Vert \Vert \mathbf{L}_{\mathbf{g}}\Vert + C \Vert \mathbf{L}_{\mathbf{g}} \Vert-\frac{1}{\gamma} \nonumber \\
\overset{(a)}{\leq}&\beta^{2} + L_{f} + (2  \Vert \boldsymbol{\lambda}^{\ast}\Vert +  \frac{R}{\sqrt{\gamma}} + C)\Vert \mathbf{L}_{\mathbf{g}}\Vert + C \Vert \mathbf{L}_{\mathbf{g}} \Vert-\frac{1}{\gamma} \nonumber \\
=& \beta^{2} + L_{f} + 2  \Vert \boldsymbol{\lambda}^{\ast}\Vert \Vert \mathbf{L}_{\mathbf{g}}\Vert + 2C \Vert \mathbf{L}_{\mathbf{g}}\Vert + \Vert \mathbf{L}_{\mathbf{g}}\Vert \frac{R}{\sqrt{\gamma}} -\frac{1}{\gamma} \nonumber \\
\overset{(b)}{=}& D + \Vert \mathbf{L}_{\mathbf{g}}\Vert \frac{R}{\sqrt{\gamma}} -\frac{1}{\gamma} \nonumber \\
\overset{(c)}{\leq}& 0, \label{eq:pf-conditional-dpp-bound-eq5}
\end{align}
where (a) follows from $\Vert \mathbf{Q}(t+1)\Vert \leq 2  \Vert \boldsymbol{\lambda}^{\ast}\Vert +  \frac{R}{\sqrt{\gamma}} + C$; (b) follows from the definition of $D$ in \eqref{eq:D}; and (c) follows from \eqref{eq:kappa-selection-condition}, i.e., the selection rule of $\gamma$.

Applying Lemma \ref{lm:dpp-bound} at iteration $t+1$ yields
\begin{align*}
\Delta(t+1) + f(\mathbf{x}(t+1)) \leq &f(\mathbf{x}^{\ast}) + \frac{1}{2\gamma} [\Vert \mathbf{x} ^{\ast}- \mathbf{x}(t)\Vert^{2} - \Vert \mathbf{x}^{\ast} - \mathbf{x}(t+1)\Vert^{2}] +\frac{1}{2} [ \Vert \mathbf{g}(\mathbf{x}(t+1))\Vert^{2} -  \Vert \mathbf{g}(\mathbf{x}(t))\Vert^{2}] \\ &+ \frac{1}{2} \big[ \beta^{2} + L_{f} + \Vert \mathbf{Q}(t+1) \Vert \Vert \mathbf{L}_{\mathbf{g}}\Vert + C \Vert \mathbf{L}_{\mathbf{g}} \Vert-\frac{1}{\gamma} \big] \Vert \mathbf{x}(t+1) - \mathbf{x}(t)\Vert^{2} \\
\overset{(a)}{\leq} & f(\mathbf{x}^{\ast}) + \frac{1}{2\gamma} [\Vert \mathbf{x} ^{\ast}- \mathbf{x}(t)\Vert^{2} - \Vert \mathbf{x}^{\ast} - \mathbf{x}(t+1)\Vert^{2}] +\frac{1}{2} [ \Vert \mathbf{g}(\mathbf{x}(t+1))\Vert^{2} -  \Vert \mathbf{g}(\mathbf{x}(t))\Vert^{2} ],
\end{align*}
where (a) follows from \eqref{eq:pf-conditional-dpp-bound-eq5}. Thus, the second part in this lemma holds at iteration $t+1$.

\end{itemize}
Thus, both parts in this lemma follow by induction.
\end{proof}

\begin{Rem}\label{rem:simple-gamma}
Recall that if each $g_k(\mathbf{x})$ is a linear function, then $L_{g_k} = 0$ for all $k\in \{1,2,\ldots,m\}$. In this case, equation \eqref{eq:kappa} reduces to 
\begin{align}
0< \gamma \leq \frac{1}{\beta^2 + L_f}. \label{eq:simple-gamma}
\end{align}
\end{Rem}

\subsection{Objective Value Violations}

\begin{Thm}[Objective Value Violations]
Let $\mathbf{x}^{\ast}$ be an optimal solution of problem \eqref{eq:program-objective}-\eqref{eq:program-set-constraint}. If we choose $\gamma$ according to \eqref{eq:kappa} in Algorithm \ref{alg:new-alg}, then for all $t\geq 1$, we have
\begin{align*}
f(\overline{\mathbf{x}}(t))  \leq  f(\mathbf{x}^{\ast}) + \frac{1}{t}\frac{R^{2}}{2\gamma }.
\end{align*}
where $R$ is defined in Assumption \ref{as:basic}.
\end{Thm}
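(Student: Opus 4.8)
The plan is to turn part~2 of Lemma~\ref{lm:conditional-dpp-bound} into a telescoping inequality over one horizon, discard the nonnegative leftover terms using Lemma~\ref{lm:virtual-queue}, and then pass from the time-averaged iterates to $\overline{\mathbf{x}}(t)$ via Jensen's inequality.

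First I would fix $t\geq 1$ and sum the inequality in part~2 of Lemma~\ref{lm:conditional-dpp-bound} over $\tau\in\{0,1,\ldots,t-1\}$. On the left, $\sum_{\tau=0}^{t-1}\Delta(\tau)=L(t)-L(0)$ telescopes. On the right, the two quadratic difference sums telescope to $\frac{1}{2\gamma}[\Vert\mathbf{x}^{\ast}-\mathbf{x}(-1)\Vert^{2}-\Vert\mathbf{x}^{\ast}-\mathbf{x}(t-1)\Vert^{2}]$ and $\frac{1}{2}[\Vert\mathbf{g}(\mathbf{x}(t-1))\Vert^{2}-\Vert\mathbf{g}(\mathbf{x}(-1))\Vert^{2}]$, respectively. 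Rearranging gives
\begin{align*}
\sum_{\tau=0}^{t-1} f(\mathbf{x}(\tau)) \leq{}& t f(\mathbf{x}^{\ast}) + \frac{1}{2\gamma}\Vert\mathbf{x}^{\ast}-\mathbf{x}(-1)\Vert^{2} - \frac{1}{2\gamma}\Vert\mathbf{x}^{\ast}-\mathbf{x}(t-1)\Vert^{2} \\
&+ \tfrac{1}{2}\Vert\mathbf{g}(\mathbf{x}(t-1))\Vert^{2} - \tfrac{1}{2}\Vert\mathbf{g}(\mathbf{x}(-1))\Vert^{2} + L(0) - L(t).
\end{align*}

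Next I would bound the residual terms. The term $-\frac{1}{2\gamma}\Vert\mathbf{x}^{\ast}-\mathbf{x}(t-1)\Vert^{2}$ is nonpositive and is dropped. For the remaining four terms I would invoke part~3 of Lemma~\ref{lm:virtual-queue}: $L(0)=\tfrac{1}{2}\Vert\mathbf{Q}(0)\Vert^{2}\leq \tfrac{1}{2}\Vert\mathbf{g}(\mathbf{x}(-1))\Vert^{2}$ and, since $t\geq 1$, $L(t)=\tfrac{1}{2}\Vert\mathbf{Q}(t)\Vert^{2}\geq \tfrac{1}{2}\Vert\mathbf{g}(\mathbf{x}(t-1))\Vert^{2}$, so that $\tfrac{1}{2}\Vert\mathbf{g}(\mathbf{x}(t-1))\Vert^{2}-\tfrac{1}{2}\Vert\mathbf{g}(\mathbf{x}(-1))\Vert^{2}+L(0)-L(t)\leq 0$. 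Combining with $\Vert\mathbf{x}^{\ast}-\mathbf{x}(-1)\Vert\leq R$ from Assumption~\ref{as:basic} yields $\sum_{\tau=0}^{t-1} f(\mathbf{x}(\tau)) \leq t f(\mathbf{x}^{\ast}) + \frac{R^{2}}{2\gamma}$.

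Finally I would divide by $t$ and apply Jensen's inequality: since $\overline{\mathbf{x}}(t)=\frac{1}{t}\sum_{\tau=0}^{t-1}\mathbf{x}(\tau)$ is a convex combination of points in the convex set $\mathcal{X}$, we have $\overline{\mathbf{x}}(t)\in\mathcal{X}$, and convexity of $f$ gives $f(\overline{\mathbf{x}}(t))\leq \frac{1}{t}\sum_{\tau=0}^{t-1} f(\mathbf{x}(\tau)) \leq f(\mathbf{x}^{\ast}) + \frac{1}{t}\frac{R^{2}}{2\gamma}$. I do not expect a genuine obstacle here; the one step requiring care is the bookkeeping that cancels the telescoped $\Vert\mathbf{g}(\cdot)\Vert^{2}$ terms against the Lyapunov terms via Lemma~\ref{lm:virtual-queue}, and the hypothesis \eqref{eq:kappa} on $\gamma$ enters only indirectly, through its role in Lemma~\ref{lm:conditional-dpp-bound}.
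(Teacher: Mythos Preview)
Your proposal is correct and follows essentially the same argument as the paper: sum part~2 of Lemma~\ref{lm:conditional-dpp-bound} over $\tau\in\{0,\ldots,t-1\}$, telescope both sides, cancel the leftover $\Vert\mathbf{g}(\cdot)\Vert^{2}$ terms against the Lyapunov terms using part~3 of Lemma~\ref{lm:virtual-queue}, bound $\Vert\mathbf{x}^{\ast}-\mathbf{x}(-1)\Vert$ by $R$, divide by $t$, and apply Jensen. Your remark that the step-size condition \eqref{eq:kappa} is used only to invoke Lemma~\ref{lm:conditional-dpp-bound} is also exactly right.
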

\begin{IEEEproof}
Fix $t\geq 1$. By part 2 in Lemma \ref{lm:conditional-dpp-bound}, we have $\Delta(\tau) + f(\mathbf{x}(\tau)) \leq f(\mathbf{x}^{\ast}) + \frac{1}{2\gamma} [\Vert \mathbf{x}^{\ast} - \mathbf{x}(\tau-1)\Vert^{2} - \Vert \mathbf{x}^{\ast} - \mathbf{x}(\tau) \Vert] + \frac{1}{2} [ \Vert \mathbf{g}(\mathbf{x}(\tau))\Vert^{2} -  \Vert \mathbf{g}(\mathbf{x}(\tau-1))\Vert^{2} ]$ for all $\tau\in\{0,1,2,\ldots \}$.

Summing over $\tau \in\{0,1,\ldots, t-1\}$ yields 
\begin{align*}
&\sum_{\tau=0}^{t-1} \Delta(\tau) + \sum_{\tau=0}^{t-1} f(\mathbf{x}(\tau)) \\
\leq &t f(\mathbf{x}^{\ast}) +  \frac{1}{2\gamma} \sum_{\tau=0}^{t-1}[\Vert \mathbf{x}^{\ast} - \mathbf{x}(\tau-1)\Vert^{2}  - \Vert \mathbf{x}^{\ast} - \mathbf{x}(\tau)\Vert^{2} ] + \frac{1}{2} \sum_{\tau=0}^{t-1}[\Vert \mathbf{g}(\mathbf{x}(\tau))\Vert^{2}-\Vert \mathbf{g}(\mathbf{x}(\tau-1))\Vert^{2}].
\end{align*}
Recalling that $\Delta(\tau) = L(\tau+1) - L(\tau)$ and simplifying the summations yields
\begin{align*}
&L(t)  - L(0) + \sum_{\tau=0}^{t-1} f(\mathbf{x}(\tau)) \\
 \leq &t f(\mathbf{x}^{\ast}) +  \frac{1}{2\gamma} \Vert \mathbf{x}^{\ast} - \mathbf{x}(-1)\Vert^{2} -\frac{1}{2\gamma} \Vert \mathbf{x}^{\ast} - \mathbf{x}(t-1)\Vert^{2}+ \frac{1}{2} \Vert \mathbf{g}(\mathbf{x}(t-1))\Vert^{2}  -\frac{1}{2} \Vert \mathbf{g}(\mathbf{x}(-1))\Vert^{2} \\
\leq &t f(\mathbf{x}^{\ast}) +  \frac{1}{2\gamma} \Vert \mathbf{x}^{\ast} - \mathbf{x}(-1)\Vert^{2} + \frac{1}{2} \Vert \mathbf{g}(\mathbf{x}(t-1))\Vert^{2}  -\frac{1}{2} \Vert \mathbf{g}(\mathbf{x}(-1))\Vert^{2}. 
\end{align*}
Rearranging terms yields
\begin{align}
\sum_{\tau=0}^{t-1} f(\mathbf{x}(\tau)) \leq &t f(\mathbf{x}^{\ast}) +  \frac{1}{2\gamma} \Vert \mathbf{x}^{\ast} - \mathbf{x}(-1)\Vert^{2} + \frac{1}{2} \Vert \mathbf{g}(\mathbf{x}(t-1))\Vert^{2}  -\frac{1}{2} \Vert \mathbf{g}(\mathbf{x}(-1))\Vert^{2} + L(0) - L(t) \nonumber \\ 
\overset{(a)}{=}  &t f(\mathbf{x}^{\ast}) +  \frac{1}{2\gamma} \Vert \mathbf{x}^{\ast} - \mathbf{x}(-1)\Vert^{2} + \frac{1}{2} \Vert \mathbf{g}(\mathbf{x}(t-1))\Vert^{2}  -\frac{1}{2} \Vert \mathbf{g}(\mathbf{x}(-1))\Vert^{2} + \frac{1}{2} \Vert \mathbf{Q}(0)\Vert^{2} - \frac{1}{2} \Vert \mathbf{Q}(t)\Vert^{2} \nonumber \\
\overset{(b)}{\leq}& t f(\mathbf{x}^{\ast}) +  \frac{1}{2\gamma} \Vert \mathbf{x}^{\ast} - \mathbf{x}(-1)\Vert^{2} +  \frac{1}{2} \Vert \mathbf{g}(\mathbf{x}(t-1))\Vert^{2} - \frac{1}{2} \Vert \mathbf{Q}(t)\Vert^{2} \nonumber\\
\overset{(c)}{\leq} & t f(\mathbf{x}^{\ast}) +  \frac{1}{2\gamma} \Vert \mathbf{x}^{\ast} - \mathbf{x}(-1)\Vert^{2} \nonumber \\
\overset{(d)}{\leq} & t f(\mathbf{x}^{\ast}) +  \frac{R^{2}}{2\gamma},  
\end{align}
where  (a) follows from the definition that $L(0) = \frac{1}{2} \Vert \mathbf{Q}(0)\Vert^{2}$ and $L(t) = \frac{1}{2}\Vert \mathbf{Q}(t)\Vert^{2}$; (b) follows from the fact that $\Vert \mathbf{Q}(0)\Vert \leq \Vert \mathbf{g}(\mathbf{x}(-1))\Vert$, i.e., part 3 in Lemma \ref{lm:virtual-queue}; (c) follows from the fact that $\Vert \mathbf{Q}(t)\Vert^{2} \geq \Vert \mathbf{g}(\mathbf{x}(t-1))\Vert^{2}$ when $t\geq 1$, i.e., part 3 in Lemma \ref{lm:virtual-queue}; and (d) follows from the fact that $\Vert \mathbf{x} - \mathbf{y}\Vert\leq R$ for all $\mathbf{x}, \mathbf{y}\in \mathcal{X}$, i.e., Assumption \ref{as:basic}.

Dividing both sides by factor $t$ yields
\begin{align*}
\frac{1}{t} \sum_{\tau=0}^{t-1} f(\mathbf{x}(\tau)) \leq f(\mathbf{x}^{\ast}) +  \frac{1}{t}\frac{R^{2}}{2\gamma }. \end{align*}

Finally, since $\overline{\mathbf{x}}(t) = \frac{1}{t} \sum_{\tau=0}^{t-1} \mathbf{x}(\tau)$ and $f(\mathbf{x})$ is convex, By Jensen's inequality it follows that 
\begin{align*}
f(\overline{\mathbf{x}}(t)) \leq \frac{1}{t} \sum_{\tau=0}^{t-1} f(\mathbf{x}(\tau)).
\end{align*}
\end{IEEEproof}

The above theorem shows that the error gap between $f(\overline{\mathbf{x}}(t))$ and the optimal value $f(\mathbf{x}^{\ast})$ delays like $O(1/t)$.

\subsection{Constraint Violations}

\begin{Thm}[Constraint Violations]
Let $\mathbf{x}^{\ast}$ be an optimal solution of problem \eqref{eq:program-objective}-\eqref{eq:program-set-constraint} and $\boldsymbol{\lambda}^\ast$ be a Lagrange multiplier vector satisfying Assumption \ref{as:strong-duality}.  If we choose $\gamma$ according to \eqref{eq:kappa} in Algorithm \ref{alg:new-alg}, then for all $t\geq 1$, the constraint functions satisfy
\begin{align*}
g_{k}(\overline{\mathbf{x}}(t)) \leq  \frac{1}{t} \big(2 \Vert \boldsymbol{\lambda}^\ast \Vert + \frac{R}{\sqrt{\gamma}} + C \big), \forall k\in\{1,2,\ldots, m\}.
\end{align*}
where $R$ and $C$ are defined in Assumption \ref{as:basic}.
\end{Thm}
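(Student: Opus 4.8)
The plan is to exploit the ``virtual queue'' machinery exactly as in the objective-value analysis, combining the queue-drives-constraints inequality of Lemma~\ref{lm:queue-constraint-inequality} with the uniform queue bound in part~1 of Lemma~\ref{lm:conditional-dpp-bound}, and then passing from the running sum of constraint values to the averaged iterate via Jensen's inequality.

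First I would fix $k\in\{1,2,\ldots,m\}$ and $t\geq 1$ and recall Lemma~\ref{lm:queue-constraint-inequality}, which gives
\[
\sum_{\tau=0}^{t-1} g_k(\mathbf{x}(\tau)) \leq Q_k(t) \leq \Vert \mathbf{Q}(t)\Vert,
\]
the last step using that each component of $\mathbf{Q}(t)$ is nonnegative (part~1 of Lemma~\ref{lm:virtual-queue}), so $Q_k(t)\leq \Vert\mathbf{Q}(t)\Vert$. Next I would invoke part~1 of Lemma~\ref{lm:conditional-dpp-bound}, which, under the stepsize rule~\eqref{eq:kappa}, guarantees $\Vert \mathbf{Q}(t)\Vert \leq 2\Vert \boldsymbol{\lambda}^\ast\Vert + \frac{R}{\sqrt{\gamma}} + C$ for every $t$. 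Chaining these two bounds and dividing by $t$ yields
\[
\frac{1}{t}\sum_{\tau=0}^{t-1} g_k(\mathbf{x}(\tau)) \leq \frac{1}{t}\Big(2\Vert \boldsymbol{\lambda}^\ast\Vert + \frac{R}{\sqrt{\gamma}} + C\Big).
\]

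Finally I would use that $\overline{\mathbf{x}}(t) = \frac{1}{t}\sum_{\tau=0}^{t-1}\mathbf{x}(\tau)$ is a convex combination of points in $\mathcal{X}$ (hence lies in $\mathcal{X}$ by convexity of $\mathcal{X}$) and that each $g_k$ is convex, so by Jensen's inequality $g_k(\overline{\mathbf{x}}(t)) \leq \frac{1}{t}\sum_{\tau=0}^{t-1} g_k(\mathbf{x}(\tau))$. Combining this with the displayed bound gives the claim for all $k$. There is no real obstacle here: all the difficulty has already been absorbed into Lemma~\ref{lm:conditional-dpp-bound} (establishing the uniform bound on $\Vert\mathbf{Q}(t)\Vert$ via the induction on the drift-plus-penalty inequality) and Lemma~\ref{lm:queue-constraint-inequality}; the only point to be careful about is that the queue bound requires the stepsize condition~\eqref{eq:kappa}, which is exactly the hypothesis of the theorem, and that the argument applies uniformly in $k$ since $\Vert\mathbf{Q}(t)\Vert$ dominates every coordinate $Q_k(t)$.
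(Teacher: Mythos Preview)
Your proposal is correct and follows essentially the same argument as the paper's own proof: Jensen's inequality on the convex $g_k$, Lemma~\ref{lm:queue-constraint-inequality} to pass from the constraint sum to $Q_k(t)$, the trivial bound $Q_k(t)\le\Vert\mathbf{Q}(t)\Vert$, and the uniform queue bound from part~1 of Lemma~\ref{lm:conditional-dpp-bound}. The only cosmetic difference is the order in which you apply Jensen and the queue inequalities; the paper leads with Jensen, you close with it.
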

\begin{IEEEproof}
Fix $t\geq 1$ and $k\in\{1,2,\ldots,m\}$. Recall that $\overline{\mathbf{x}}(t) = \frac{1}{t}\sum_{\tau=0}^{t-1} \mathbf{x}(\tau)$. Thus, 
\begin{align*}
g_k (\overline{\mathbf{x}}(t)) &\overset{(a)}{\leq} \frac{1}{t} \sum_{\tau=0}^{t-1} g_k(\mathbf{x}(\tau)) \\
 &\overset{(b)}{\leq} \frac{Q_k(t)}{t} \\
 &\leq \frac{\Vert \mathbf{Q}(t)\Vert}{t}\\
 &\overset{(c)}{\leq} \frac{1}{t} \big( 2 \Vert \boldsymbol{\lambda}^\ast \Vert + \frac{R}{\sqrt{\gamma}} + C \big),
 \end{align*}
where (a) follows from the convexity of $g_k(\mathbf{x}), k\in\{1,2,\ldots,m\}$ and Jensen's inequality; (b) follows from Lemma \ref{lm:queue-constraint-inequality}; and (c) follows from part 1 in Lemma \ref{lm:conditional-dpp-bound}.
\end{IEEEproof}

\subsection{Convergence Rate of Algorithm \ref{alg:new-alg}}

The next theorem summarizes the last two subsections.
\begin{Thm}\label{thm:overall-convergence}
Let $\mathbf{x}^{\ast}$ be an optimal solution of problem \eqref{eq:program-objective}-\eqref{eq:program-set-constraint} and $\boldsymbol{\lambda}^\ast$ be a Lagrange multiplier vector satisfying Assumption \ref{as:strong-duality}. If we choose $\gamma$ according to \eqref{eq:kappa} in Algorithm \ref{alg:new-alg}, then for all $t\geq 1$, we have
\begin{align*}
f(\overline{\mathbf{x}}(t))  \leq &  f(\mathbf{x}^{\ast}) + \frac{1}{t}\frac{R^{2}}{2\gamma}, \\
g_{k}(\overline{\mathbf{x}}(t)) \leq&  \frac{1}{t} \big(2 \Vert \boldsymbol{\lambda}^\ast \Vert + \frac{R}{\sqrt{\gamma}} + C \big), \forall k\in\{1,2,\ldots, m\},
\end{align*}
where $C$ and $R$ are defined in Assumption \ref{as:basic}. In summary, Algorithm \ref{alg:new-alg} ensures error decays like $O(1/t)$ and provides an $\epsilon$-optimal solution with convergence time $O(1/\epsilon)$.
\end{Thm}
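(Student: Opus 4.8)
The plan is simply to assemble Theorem~\ref{thm:overall-convergence} from the two preceding theorems, both of which were established under the identical hypothesis that $\gamma$ satisfies \eqref{eq:kappa}. First I would invoke the Objective Value Violations theorem verbatim to obtain $f(\overline{\mathbf{x}}(t)) \leq f(\mathbf{x}^{\ast}) + \frac{1}{t}\frac{R^{2}}{2\gamma}$ for every $t\geq 1$. Then I would invoke the Constraint Violations theorem verbatim to obtain $g_{k}(\overline{\mathbf{x}}(t)) \leq \frac{1}{t}\big(2\Vert\boldsymbol{\lambda}^{\ast}\Vert + R/\sqrt{\gamma} + C\big)$ for every $k\in\{1,\ldots,m\}$ and every $t\geq 1$. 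Since both statements concern the same running-average iterate $\overline{\mathbf{x}}(t)$ generated by Algorithm~\ref{alg:new-alg}, they hold simultaneously, which is precisely the displayed pair of inequalities in the theorem.

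For the concluding claim about $\epsilon$-optimality, I would argue as follows. Fix $\epsilon>0$ and set $C_1 = \frac{R^{2}}{2\gamma}$ and $C_2 = 2\Vert\boldsymbol{\lambda}^{\ast}\Vert + R/\sqrt{\gamma} + C$, both finite constants independent of $t$ by Assumptions~\ref{as:basic}--\ref{as:strong-duality} and the fixed choice of $\gamma$. Choosing any $t \geq \max\{C_1, C_2\}/\epsilon$ forces $f(\overline{\mathbf{x}}(t)) - f(\mathbf{x}^{\ast}) \leq \epsilon$ and $g_{k}(\overline{\mathbf{x}}(t)) \leq \epsilon$ for all $k$, so $\overline{\mathbf{x}}(t)$ is an $\epsilon$-optimal and $\epsilon$-feasible solution after $O(1/\epsilon)$ iterations; each iteration of Algorithm~\ref{alg:new-alg} requires only one gradient evaluation and one projection onto $\mathcal{X}$, so the overall convergence time is $O(1/\epsilon)$.

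There is essentially no obstacle at this stage: all the analytical work was carried out in Lemma~\ref{lm:dpp-bound} (the drift-plus-penalty bound, via the descent lemma and the reinterpretation of the projection step as the minimization of a strongly convex surrogate together with Corollary~\ref{cor:strong-convex-quadratic-optimality}) and in Lemma~\ref{lm:conditional-dpp-bound} (the induction showing the virtual queue stays bounded once $\gamma$ is taken small enough), and was then packaged into the two theorems by telescoping the Lyapunov drift and applying Jensen's inequality to the convex functions $f$ and $g_k$. The only point worth stating explicitly in the proof is that the selection rule \eqref{eq:kappa} is consistent, i.e.\ it is a single admissible range of $\gamma$ that simultaneously validates the hypotheses of both theorems, so no trade-off or re-tuning of $\gamma$ between the objective bound and the constraint bound is required.
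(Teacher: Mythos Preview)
Your proposal is correct and matches the paper's approach exactly: the paper presents Theorem~\ref{thm:overall-convergence} purely as a summary of the two preceding theorems (Objective Value Violations and Constraint Violations), with no additional proof, since both were already proven under the same hypothesis \eqref{eq:kappa}. Your added remarks about the $O(1/\epsilon)$ convergence time and the consistency of the $\gamma$ selection are accurate elaborations of what the paper leaves implicit.
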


\subsection{Practical Implementations}
By Theorem \ref{thm:overall-convergence}, it suffices to choose $\gamma$ according to \eqref{eq:kappa} to guarantee the $O(1/t)$ convergence rate of Algorithm \ref{alg:new-alg}. By Remark \ref{rem:simple-gamma}, if all constraint functions are linear, then \eqref{eq:kappa} reduces to \eqref{eq:simple-gamma} and is independent of $\Vert \boldsymbol{\lambda}^\ast\Vert$. For general constraint functions, we need to know the value of $\Vert \boldsymbol{\lambda}^{\ast}\Vert$, which is typically unknown, to select $\gamma$ according to \eqref{eq:kappa}.  However, it is easy to observe that an upper bound of $\Vert \boldsymbol{\lambda}^{\ast}\Vert$ is sufficient for us to choose $\gamma$ satisfying \eqref{eq:kappa}. To obtain an upper bound of $\Vert \boldsymbol{\lambda}^{\ast}\Vert$, the next lemma is useful if problem \eqref{eq:program-objective}-\eqref{eq:program-set-constraint} has an interior feasible point, i.e., the Slater's condition is satisfied.

The next lemma from \cite{Nedic09} provides an upper bound of the Lagrangian multipliers associated with the zero duality gap under the Slater's condition.
\begin{Lem}[Lemma 1 in \cite{Nedic09}] \label{lm:Nedic-multiplier-upper-bound}
Consider the convex program given by 
\begin{align*}
\min_{\mathbf{x}} \quad &  f(\mathbf{x})\\
\text{s.t.} \quad  & g_{k}(\mathbf{x})\leq 0, k\in\{1,2,\ldots,m\}\\
			 & \mathbf{x}\in \mathcal{X} \subseteq{R}^{n}
\end{align*}
and define the Lagrangian dual function as $q(\boldsymbol{\lambda}) = \inf_{\mathbf{x}\in \mathcal{X}}\{f(\mathbf{x}) + \boldsymbol{\lambda}^{T} \mathbf{g}(\mathbf{x})\}$. If the Slater's condition holds, i.e., there exists $
\hat{\mathbf{x}}\in X$ such that $g_{j}(\mathbf{x}) < 0, \forall j\in\{1,2,\ldots, m\}$, then the level sets $\mathcal{V}_{\hat{\boldsymbol{\lambda}}} = \{\boldsymbol{\lambda}\geq \mathbf{0} :  q(\boldsymbol{\lambda}) \geq q(\hat{\boldsymbol{\lambda}})\}$ is bounded for any $\hat{\boldsymbol{\lambda}}$. In particular, we have
\begin{align*}
\max_{\boldsymbol{\lambda} \in \mathcal{V}_{\hat{\boldsymbol{\lambda}}}} \Vert \boldsymbol{\lambda}\Vert \leq \frac{1}{\min_{1\leq j\leq m}\{-g_{j}(\hat{\mathbf{x}})\}} (f(\hat{\mathbf{x}}) - q(\hat{\boldsymbol{\lambda}})).
\end{align*}
\end{Lem}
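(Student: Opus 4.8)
The plan is to work directly from the definition of the level set $\mathcal{V}_{\hat{\boldsymbol{\lambda}}}$ and the Slater point $\hat{\mathbf{x}}$, exploiting the fact that the Lagrangian dual function $q$ is an infimum over $\mathbf{x}\in\mathcal{X}$, hence in particular bounded above by the value of the Lagrangian at the single feasible point $\hat{\mathbf{x}}$. First I would fix an arbitrary $\boldsymbol{\lambda}\in\mathcal{V}_{\hat{\boldsymbol{\lambda}}}$, so that $\boldsymbol{\lambda}\geq\mathbf{0}$ and $q(\boldsymbol{\lambda})\geq q(\hat{\boldsymbol{\lambda}})$ by membership in the level set. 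Then I would write the chain
\begin{align*}
q(\hat{\boldsymbol{\lambda}}) \leq q(\boldsymbol{\lambda}) = \inf_{\mathbf{x}\in\mathcal{X}}\{f(\mathbf{x})+\boldsymbol{\lambda}^T\mathbf{g}(\mathbf{x})\} \leq f(\hat{\mathbf{x}})+\boldsymbol{\lambda}^T\mathbf{g}(\hat{\mathbf{x}}) = f(\hat{\mathbf{x}}) + \sum_{j=1}^m \lambda_j g_j(\hat{\mathbf{x}}),
\end{align*}
where the last inequality is just evaluating the infimand at $\mathbf{x}=\hat{\mathbf{x}}\in\mathcal{X}$.

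Next I would rearrange this to isolate the sum: $\sum_{j=1}^m \lambda_j(-g_j(\hat{\mathbf{x}})) \leq f(\hat{\mathbf{x}}) - q(\hat{\boldsymbol{\lambda}})$. Since $\hat{\mathbf{x}}$ is a Slater point, $-g_j(\hat{\mathbf{x}})>0$ for every $j$, so each term $\lambda_j(-g_j(\hat{\mathbf{x}}))$ is nonnegative (using $\lambda_j\geq 0$). Bounding each such term below by $\lambda_j \cdot \min_{1\leq j\leq m}\{-g_j(\hat{\mathbf{x}})\}$ gives
\begin{align*}
\min_{1\leq j\leq m}\{-g_j(\hat{\mathbf{x}})\}\cdot \sum_{j=1}^m \lambda_j \leq \sum_{j=1}^m \lambda_j(-g_j(\hat{\mathbf{x}})) \leq f(\hat{\mathbf{x}}) - q(\hat{\boldsymbol{\lambda}}).
\end{align*}
Dividing by the positive quantity $\min_{1\leq j\leq m}\{-g_j(\hat{\mathbf{x}})\}$ yields $\sum_{j=1}^m \lambda_j \leq \frac{1}{\min_{1\leq j\leq m}\{-g_j(\hat{\mathbf{x}})\}}(f(\hat{\mathbf{x}})-q(\hat{\boldsymbol{\lambda}}))$. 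Finally, since $\boldsymbol{\lambda}\geq\mathbf{0}$, we have $\Vert\boldsymbol{\lambda}\Vert \leq \Vert\boldsymbol{\lambda}\Vert_1 = \sum_{j=1}^m\lambda_j$ (the Euclidean norm is dominated by the $\ell_1$ norm for nonnegative vectors, indeed for all vectors), which gives the claimed bound; taking the supremum over $\boldsymbol{\lambda}\in\mathcal{V}_{\hat{\boldsymbol{\lambda}}}$ establishes both boundedness of the level set and the explicit estimate.

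There is no serious obstacle here — the argument is a short, standard manipulation. The only points requiring a moment's care are: (i) using that $q$ is defined as an \emph{infimum} so that evaluation at $\hat{\mathbf{x}}$ is a legitimate upper bound, (ii) the sign bookkeeping when moving from $\lambda_j g_j(\hat{\mathbf{x}})$ to $\lambda_j(-g_j(\hat{\mathbf{x}}))$, which relies crucially on the strict Slater inequality so the divisor is strictly positive, and (iii) the final norm comparison $\Vert\boldsymbol{\lambda}\Vert_2\leq\Vert\boldsymbol{\lambda}\Vert_1$. If one wanted the bound for $q(\hat{\boldsymbol{\lambda}})=-\infty$ it would be vacuous, so implicitly one assumes $q(\hat{\boldsymbol{\lambda}})>-\infty$, which holds e.g. whenever the primal value is finite (guaranteed here by Assumption \ref{as:basic}).
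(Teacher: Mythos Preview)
Your argument is correct and is exactly the standard proof of this result (originally due to Uzawa, reproduced in Nedi\'c--Ozdaglar). Note that the paper itself does not supply a proof of this lemma; it simply cites it as Lemma~1 in \cite{Nedic09}, so there is no ``paper's own proof'' to compare against---but what you have written is precisely the argument one finds in the cited reference.
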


By Lemma \ref{lm:Nedic-multiplier-upper-bound}, if convex program  \eqref{eq:program-objective}-\eqref{eq:program-set-constraint} has a feasible point $\hat{\mathbf{x}}\in \mathcal{X}$ such that $g_{k}(\hat{\mathbf{x}}) < 0, \forall k\in\{1,2,\ldots, m\}$, then we can take an arbitrary $\hat{\boldsymbol{\lambda}} \geq \mathbf{0}$ to obtain the value $q(\hat{\boldsymbol{\lambda}}) = \inf_{\mathbf{x}\in \mathcal{X}}\{f(\mathbf{x}) + \hat{\boldsymbol{\lambda}}^{T} \mathbf{g}(\mathbf{x})\}$ and conclude that $\Vert 
\boldsymbol{\lambda}^{\ast}\Vert \leq \frac{1}{\min_{1\leq j\leq m}\{-g_{j}(\hat{\mathbf{x}})\}} (f(\hat{\mathbf{x}}) - q(\hat{\boldsymbol{\lambda}}))$.  Since $f(\mathbf{x})$ is continuous and $\mathcal{X}$ is a compact set, there exists a constant $F>0$ such that $|f(\mathbf{x})|\leq F$ for all $\mathbf{x}\in \mathcal{X}$. Thus, we can take $\hat{\boldsymbol{\lambda}}= \mathbf{0}$ such that $q(\hat{\boldsymbol{\lambda}}) = \min_{\mathbf{x}\in \mathcal{X}}\{f(\mathbf{x})\} \geq -F$. It follows from Lemma \ref{lm:Nedic-multiplier-upper-bound} that $\Vert \boldsymbol{\lambda}^\ast\Vert \leq \frac{1}{\min_{1\leq j\leq m}\{-g_{j}(\hat{\mathbf{x}})\}} (f(\hat{\mathbf{x}}) - q(\hat{\boldsymbol{\lambda}})) \leq \frac{2F}{\min_{1\leq j\leq m}\{-g_{j}(\hat{\mathbf{x}})\}}$.

\section{Numerical Results}
\subsection{Linear Programs with Box Constraints}
Consider the following linear program
\begin{align*}
\min~~ &  \mathbf{c}^T \mathbf{x}\\
\text{s.t.} \quad  & \mathbf{A}\mathbf{x} \leq \mathbf{b}\\
			 & \mathbf{x} \in \mathcal{X} =\{\mathbf{x}: \mathbf{x}^{\min} \leq \mathbf{x} \leq \mathbf{x}^{\max}\} \subseteq \mathbf{R}^n
\end{align*}
In general, we are interested in the linear programs where the optimum is attained by a finite $\mathbf{x}^\ast$.  The box constraints are in general needed to enforce the compactness of the constraint set and appear in practical applications since decision variable $\mathbf{x}$ in engineering problems should be taken from a bounded set.  Such a linear program can arise in applications like multi-commodity network flow problems, portfolio optimization and etc.

Note that the object function $f(\mathbf{x}) = \mathbf{c}^T \mathbf{x}$ is smooth with modulus $L_f = 0$; and the constraint function $\mathbf{g}(\mathbf{x}) = \mathbf{A} \mathbf{x} - \mathbf{b}$ is smooth with modulus $\mathbf{L}_\mathbf{g} = \mathbf{0}$ and is Lipschitz with modulus $\beta = \Vert \mathbf{A}\Vert$, where $\Vert \mathbf{A} \Vert$ represents the $L_2$ norm of matrix $\mathbf{A}$.

Since both the object and constraint functions in linear programs are separable,  Algorithm \ref{alg:general-alg} can also be applied to solve it and at each iteration the update of $\mathbf{x}(t)$ only requires to solve $n$  one-dimension set constrained problems, which  have closed-form solutions. Thus,  Algorithm \ref{alg:new-alg} does not have obvious complexity advantages over Algorithm \ref{alg:general-alg}. The numerical experiment performed in this subsection is only to verify the $O(1/t)$ convergence rate of Algorithm \ref{alg:new-alg}. 

Now consider an instance of the linear program with $\mathbf{c} = [-1,-4,-3,-2]^T$, $\mathbf{A} = \left[ \begin{array}{cccc} 6 &1 & 5 &1\\ 0 &3 & 6 &6 \\ 5& 6 & 4 & 6\end{array}\right]$, $\mathbf{b} = [6,4,10]^T$, $\mathbf{x}^{\min} = [0,0,0,0]^T$ and $\mathbf{x}^{\max} = [10,10,10,10]^T$. The optimal solution to this linear program is $\mathbf{x}^\ast = [0.4, \frac{4}{3}, 0, 0]^T$ and the optima value is $f^\ast = -5.73333$.  To verify the convergence, Figure \ref{fig:lp_convergence} shows the value of object and constraint functions yielded by Algorithm \ref{alg:new-alg} with $\mathbf{x}(-1) = \mathbf{x}^{\max} = [10,10,10,10]^T$ and $\gamma = \Vert \mathbf{\mathbf{A}}\Vert_F^2= 1/257$ since $\Vert \mathbf{A}\Vert_F \geq \Vert \mathbf{A}\Vert$, where $\Vert \mathbf{A}\Vert_F$  is the Frobenius norm of matrix $\mathbf{A}$. Note that by Theorem \ref{thm:overall-convergence} and equation \eqref{eq:kappa}, to guarantee the $O(1/t)$ convergence rate of Algorithm \ref{alg:new-alg}, it suffices to choose $\gamma \leq \frac{1}{\beta^2} = \frac{1}{\Vert \mathbf{A}\Vert^2}$. To verify the $O(1/t)$ convergence rate, Figure \ref{fig:lp_convergence_rate} plots $f^\ast - f(\overline{\mathbf{x}}(t))$, linear constraint function values and curve $1/t$ with both x-axis and y-axis in $\log_{10}$ scales.  It can observed that the curve of $f^\ast - f(\overline{\mathbf{x}}(t))$ is parallel to the the curve of $1/t$ for large $t$. Note that the linear constraints are satisfied very early (i.e., negative starting from iteration $t=7$) although two among the three linear constraints are actually tight at the optimal solution; and hence are not drawn in $\log_{10}$ scale after  iteration $t=7$. Figure \ref{fig:lp_convergence_rate} verifies that the error of Algorithm \ref{alg:new-alg} decays like $O(1/t)$ and suggests that it is actually $\Theta(1/t)$ for this linear program.  

\begin{figure}[htbp]
\centering
   \includegraphics[width=0.8\textwidth,height=0.8\textheight,keepaspectratio=true]{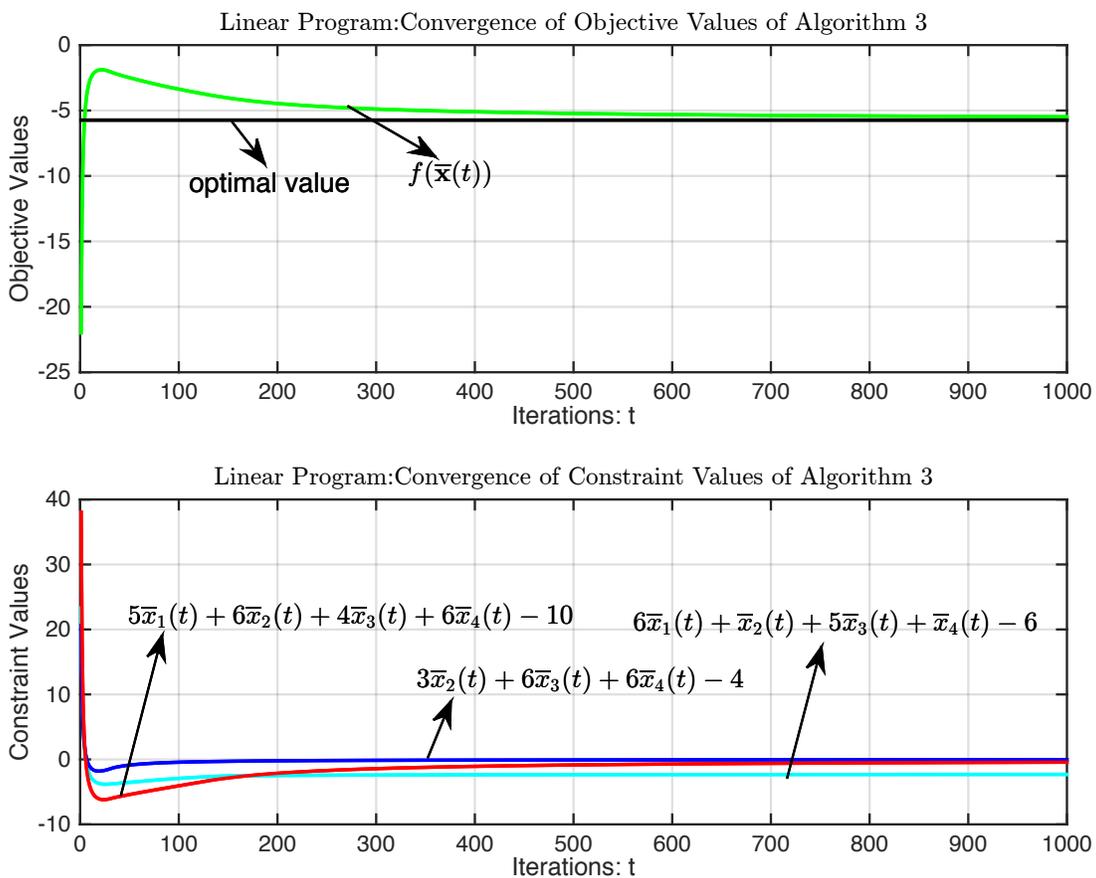} 
   \caption{The convergence of Algorithm \ref{alg:new-alg} for a linear program.}
   \label{fig:lp_convergence}
\end{figure}

\begin{figure}[htbp]
\centering
   \includegraphics[width=0.8\textwidth,height=0.8\textheight,keepaspectratio=true]{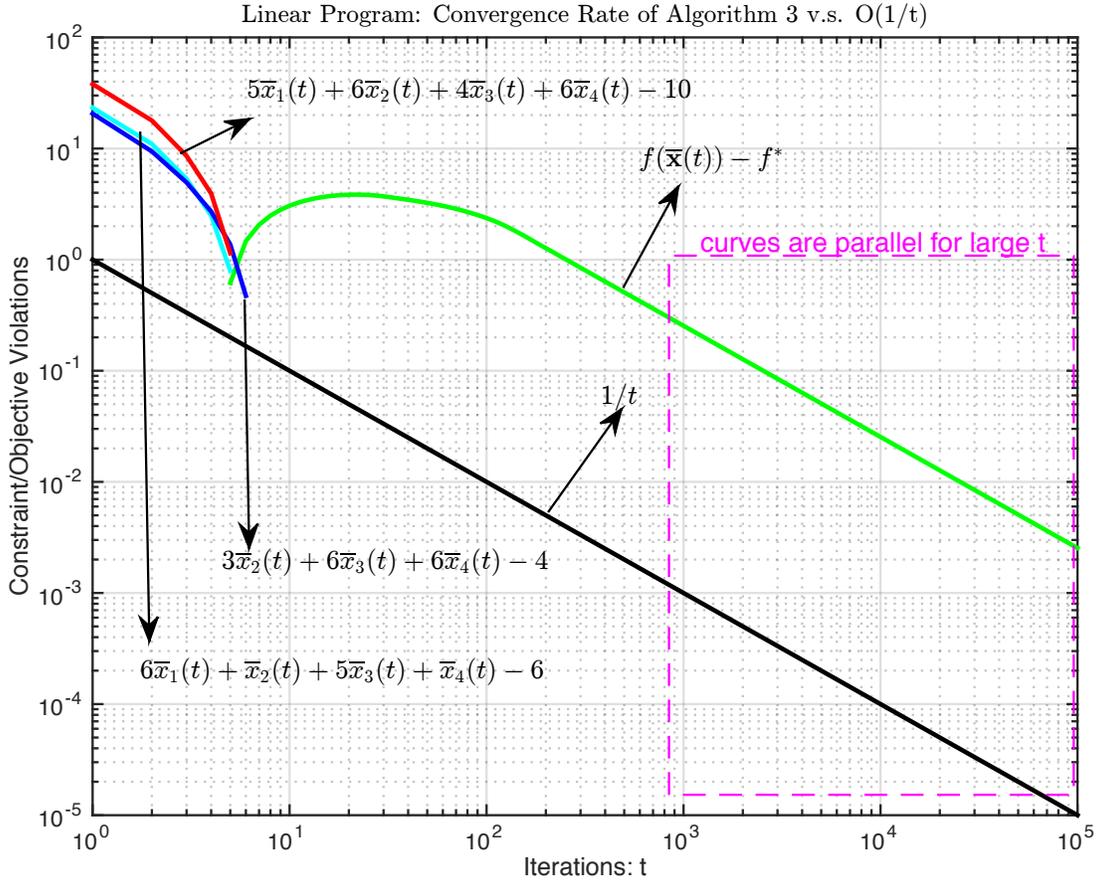} 
   \caption{The convergence rate of Algorithm \ref{alg:new-alg} for a linear program.}
   \label{fig:lp_convergence_rate}
\end{figure}

In fact, it can be verified that the dynamic of Algorithm \ref{alg:new-alg} is identical to that of Algorithm \ref{alg:general-alg} with $\alpha = 1/(2\gamma)$ if they take the same initial $\mathbf{x}(-1)$. This is because the gradient based update of $\mathbf{x}(t)$ in Algorithm \ref{alg:new-alg} can be interpreted  as the minimization of sum of the first-order expansion of $\phi(\mathbf{x}) = f(\mathbf{x}) + [\mathbf{Q}(t) + \mathbf{g}(\mathbf{x}(t-1))]^T \mathbf{g}(\mathbf{x})$ around point $\mathbf{x} = \mathbf{x}(t-1)$ and $\frac{1}{2\gamma} \Vert \mathbf{x} - \mathbf{x}(t-1)\Vert^2$ (as observed in equation \eqref{eq:pf-conditional-dpp-bound-eq1}). In the case when both $f(\mathbf{x})$ and $\mathbf{g}(\mathbf{x})$ are linear, the first order expansion of $\phi(\mathbf{x})$ is identical to itself while the update of  $\mathbf{x}(t)$ in Algorithm \ref{alg:general-alg} is to minimize $\phi(\mathbf{x}) + \alpha \Vert \mathbf{x} - \mathbf{x}(t-1)\Vert$. Thus, if $\alpha = 1/(2\gamma)$,  then Algorithm \ref{alg:new-alg} identical to Algorithm \ref{alg:general-alg}.

\subsection{Quadratic Programs with Box Constraints}
Consider the following quadratic program
\begin{align*}
\min~~ &  \mathbf{x}^T\mathbf{P}\mathbf{x} + \mathbf{c}^T \mathbf{x}\\
\text{s.t.} \quad  & \mathbf{A}\mathbf{x} \leq \mathbf{b}\\
			 & \mathbf{x}^T \mathbf{Q} \mathbf{x} + \mathbf{d}^T \mathbf{x}\leq e\\
			 & \mathbf{x} \in \mathcal{X} =\{\mathbf{x}: \mathbf{x}^{\min} \leq \mathbf{x} \leq \mathbf{x}^{\max}\}
\end{align*}
where both $\mathbf{P}$ and $\mathbf{Q}$ are symmetric and positive semidefinite to ensure the convexity of the quadratic program.

Note that the object function $f(\mathbf{x}) = \mathbf{x}^T\mathbf{P}\mathbf{x} + \mathbf{c}^T \mathbf{x}$ is smooth with modulus $L_f = \Vert \mathbf{P}\Vert$, where $\Vert \mathbf{P}\Vert$ represents the $L_2$ norm of matrix $\mathbf{P}$; and the constraint function $\mathbf{g}(\mathbf{x}) = [\mathbf{A} \mathbf{x} - \mathbf{b}; \mathbf{x}^T \mathbf{Q} \mathbf{x}-\mathbf{d}^T\mathbf{x} -e]^T$ is smooth with modulus $ \Vert \mathbf{L}_\mathbf{g} \Vert =  \Vert \mathbf{Q}\Vert$ and is Lipschitz continuous with modulus $\beta \leq \Vert \mathbf{A}\Vert + \max_{\mathbf{x}\in \mathcal{X}}[ \Vert 2\mathbf{x}^T \mathbf{Q} + \mathbf{d}^T\Vert_2] \leq \Vert \mathbf{A}\Vert+ 2 \Vert \mathbf{Q}\Vert (\Vert \mathbf{x}^{\min}\Vert + \Vert \mathbf{x}^{\max}\Vert) + \Vert \mathbf{d}\Vert $; constant $C$ satisfying Assumption \ref{as:basic} can be given by $C =  \Vert \mathbf{A}\Vert (\Vert \mathbf{x}^{\min}\Vert + \Vert \mathbf{x}^{\max}\Vert) + \Vert \mathbf{b}\Vert + \Vert \mathbf{Q}\Vert (\Vert \mathbf{x}^{\min}\Vert + \Vert \mathbf{x}^{\max}\Vert)^2 + \Vert \mathbf{d}\Vert (\Vert \mathbf{x}^{\min}\Vert + \Vert \mathbf{x}^{\max}\Vert) + |e|$; and constant $R$ satisfying Assumption \ref{as:basic} can be given by $R = \Vert \mathbf{x}^{\min}\Vert + \Vert \mathbf{x}^{\max}\Vert$.

Note that if $\mathbf{P}$ or $\mathbf{Q}$ are not diagonal, then the object function or constraint functions are not separable and hence at each iteration the update of $\mathbf{x}(t)$ in Algorithm \ref{alg:general-alg} requires to solve an $n$-dimensional set constrained quadratic program, which can have huge complexity when $n$ is large.  In contrast, the update of $\mathbf{x}(t)$ in Algorithm \ref{alg:new-alg} has much smaller complexity. 

Now consider an instance of the quadratic program with $\mathbf{P} = \left[ \begin{array}{cc} 1 & 2 \\ 2 &4 \end{array}\right]$, $\mathbf{c} = [-8,-2]^T$, $\mathbf{A} = \left[ \begin{array}{cc} 3 & 1 \\ 2 & 2 \end{array}\right]$, $\mathbf{b} = [4,1]^T$, $\mathbf{Q} = \left[ \begin{array}{cc} 2 & 1 \\ 1 &3 \end{array}\right]$, $\mathbf{d} = [-1,2]^T$, $e = 5$, $\mathbf{x}^{\min} = [0,0]^T$ and $\mathbf{x}^{\max} = [5,5]^T$. The optimal solution to this quadratic program is $\mathbf{x}^\ast = [0.5,0]^T$ and the optimal value is $f^\ast = -3.75$.

Note that $\hat{\mathbf{x}} = [0,0]^T$ is an interior point with $\min_{1\leq j\leq m}\{-g_j(\hat{\mathbf{x}})\} = 1$ and if $\hat{\boldsymbol{\lambda}} = [0,0,0]^T$, then $q(\hat{\boldsymbol{\lambda}}) = \min_{\mathbf{x}\in \mathcal{X}} \{f(\mathbf{x}) + \hat{\boldsymbol{\lambda}}^T \mathbf{g}(\mathbf{x})\} =  \min_{\mathbf{x}\in \mathcal{X}} \{\mathbf{x}^T \mathbf{P}\mathbf{x} + \mathbf{c}^T \mathbf{x}\} \geq \min_{\mathbf{x}\in \mathcal{X}} \{\mathbf{x}^T \mathbf{P}\mathbf{x}\} \min_{\mathbf{x}\in \mathcal{X}} + \mathbf{c}^T \mathbf{x}\} \geq -50$. Thus, by Lemma \ref{lm:Nedic-multiplier-upper-bound}, we know $\Vert \boldsymbol{\lambda}^\ast\Vert \leq 50$ for any $\boldsymbol{\lambda}^\ast$ attaining strong duality.  It can be checked that $\gamma \leq 0.1395$ satisfies equation \eqref{eq:kappa} and hence ensures the $O(1/t)$ convergence rate of Algorithm \ref{alg:new-alg} by Theorem \ref{thm:overall-convergence}. To verify the convergence, Figure \ref{fig:qp_convergence} shows the value of object and constraint functions yielded by Algorithm \ref{alg:new-alg} with $\mathbf{x}(-1) = \mathbf{x}^{\min} = [0,0]^T$ and $\gamma = 0.1395$. To verify the $O(1/t)$ convergence rate, Figure \ref{fig:qp_convergence_rate} plots $f^\ast - f(\overline{\mathbf{x}}(t))$, $\mathbf{g}(\overline{\mathbf{x}}(t))$ and function $h(t) = 1/t$ with both x-axis and y-axis in $\log_{10}$ scales.  It can observed that the curves of $f^\ast - f(\overline{\mathbf{x}}(t))$ and $2\overline{x}_1(t) + 2\overline{x}_2(t) -1$ are  parallel to the  curve of $1/t$ for large $t$. Note that the other linear constraint $3x_1 + x_2 -4\leq 0$ and quadratic constraint $\mathbf{x}^T\mathbf{Q} \mathbf{x} + \mathbf{d}^T \mathbf{x} -e\leq 0$ are satisfied starting from iteration $t=1$ (i.e., negative starting from iteration $t=1$) as also observed in Figure \ref{fig:qp_convergence} and hence are not drawn in $\log_{10}$ scale. Figure \ref{fig:qp_convergence_rate} verifies that the error of Algorithm \ref{alg:new-alg} decays like $O(1/t)$ and suggests that it is actually $\Theta(1/t)$ for this linear program.  

\begin{figure}[htbp]
\centering
   \includegraphics[width=0.8\textwidth,height=0.8\textheight,keepaspectratio=true]{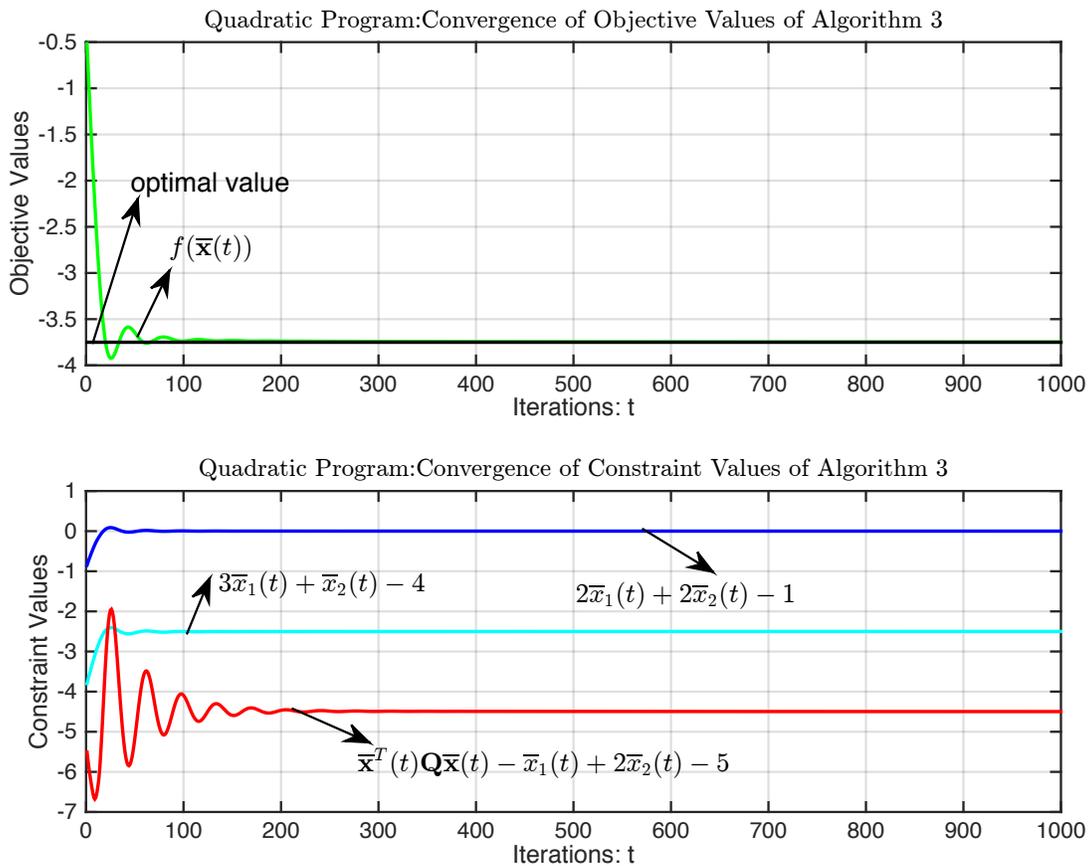} 
   \caption{The convergence of Algorithm \ref{alg:new-alg} for a quadratic program.}
   \label{fig:qp_convergence}
\end{figure}

\begin{figure}[htbp]
\centering
   \includegraphics[width=0.8\textwidth,height=0.8\textheight,keepaspectratio=true]{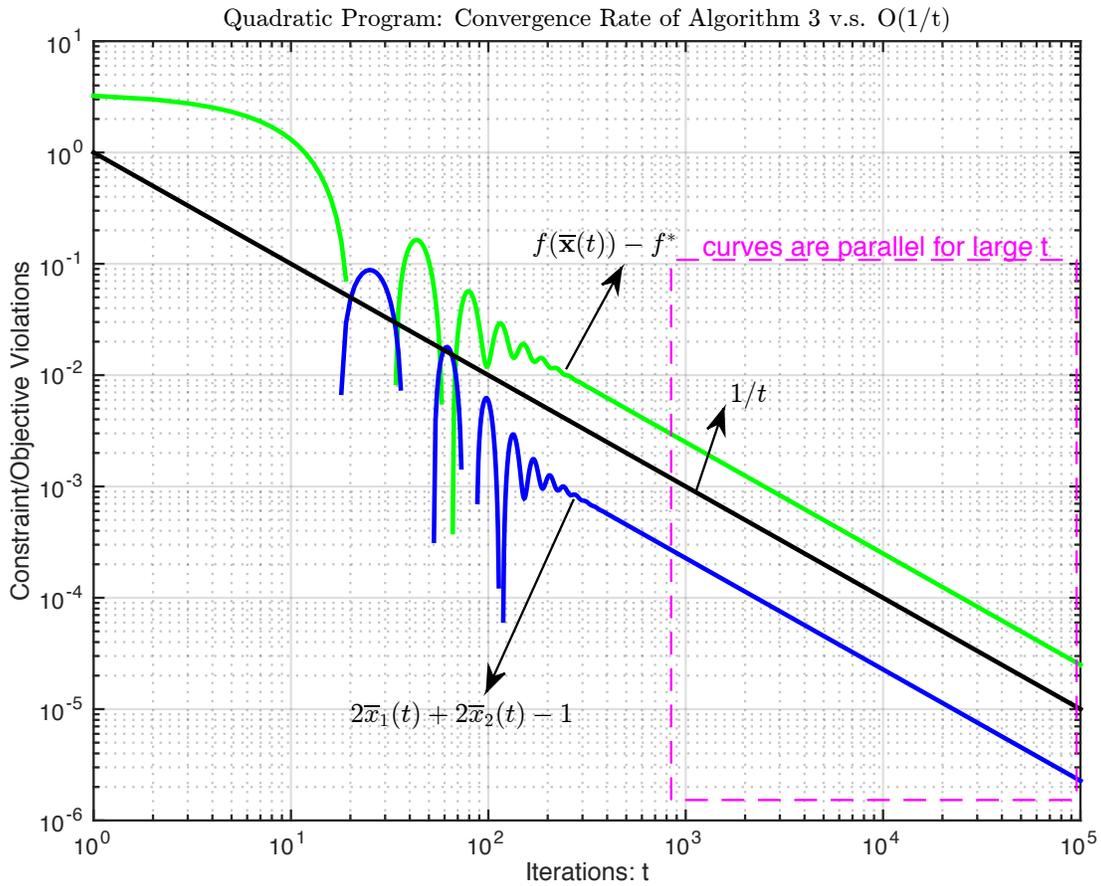} 
   \caption{The convergence rate of Algorithm \ref{alg:new-alg} for a quadratic program.}
   \label{fig:qp_convergence_rate}
\end{figure}

\section{Conclusion}

This paper proposes a new primal-dual type algorithm with the $O(1/t)$ convergence rate for constrained convex programs. At each iteration, the new algorithm updates the primal variable $\mathbf{x}(t)$ following simple gradient updates and hence is suitable to large scale convex programs. The convergence rate of the new algorithm is faster than the $O(1/\sqrt{t})$ convergence rate of the classical primal-dual subgradient algorithm or the dual subgradient algorithm.  The new algorithm has the same convergence rate as a  parallel algorithm recently proposed in \cite{YuNeely15ArXivGeneralConvex} for convex programs with separable object and constraint functions. However, if the object or constraint function is not separable, the algorithm in \cite{YuNeely15ArXivGeneralConvex} is no longer parallel and each iteration requires to solve a set constrained convex program. In contrast, the algorithm proposed in this paper only involves a simple gradient update with low complexity at each iteration.  Thus,  the new algorithm has much smaller per-iteration complexity than the algorithm in \cite{YuNeely15ArXivGeneralConvex}.

\bibliographystyle{IEEEtran}
\bibliography{IEEEfull,mybibfile}

\end{document}